\newtheorem{theorem}{Theorem}
\newtheorem{lemma}[theorem]{Lemma}
\newtheorem{proposition}[theorem]{Proposition}
\newtheorem{corollary}[theorem]{Corollary}
\theoremstyle{definition}
\newtheorem{remark}[theorem]{Remark}
\theoremstyle{definition}
\newtheorem{example}[theorem]{Example}
\theoremstyle{definition}
\newtheorem{definition}[theorem]{Definition}
\def\R{{\mathbb R}}
\def\SS{{\mathbb S}}
\def\FF{{\mathcal Q}}
\def\GG{{\mathcal G}}
\begin{document}
\title{ Simple game induced manifolds}
\author{Pavel Galashin$^1$}
\email{pgalashin@gmail.com}
\address{$^1$Department of Math. and Mech., St. Petersburg State University}
\author{Gaiane Panina$^2$}
\email{gaiane-panina@rambler.ru}
\address{$^2$SPIIRAS, St. Petersburg State University}

\maketitle

\begin{abstract}
 Starting by a simple game $\FF $  as a combinatorial data,
we build up a cell complex  $M(\FF)$, whose construction resembles
combinatorics of the permutohedron. The cell complex proves to be a
combinatorial manifold; we call it the \textit{ simple game induced
manifold.} By some motivations coming from polygonal linkages,  we
think of $\FF$ and of  $M(\FF)$ as of\textit{ a  quasilinkage} and
the \textit{moduli space of the quasilinkage}  respectively.
  We present some examples of quasilinkages and show that the moduli space
  retains many  properties of moduli space of polygonal linkages.
  In particular, we show that the moduli space $M(\FF)$ is homeomorphic to
   the space of stable point configurations on $S^1$, for an associated with a
  quasilinkage notion of stability.

\end{abstract}

\keywords{Polygonal linkage, simple game, permutohedron, cell complex, configuration space}

%

\section{Introduction}\label{SectionIntroduction}

It is a usual praxis that some combinatorial data produce a
geometric object. Classical examples are \textit{permutohedron},
\textit{associahedron} (see \citep{Z}), other ``famous'' polytopes,
and their  generalizations \textit{graph-associahedra}  and
\textit{nestohedra} (see \citep{Post}). In the paper, we act in a
somewhat similar way starting by a \textit{simple game} $M(\FF)$ (in
the  usual sense of game theory) as a combinatorial data.
 We build up a cell complex  $M(\FF)$, whose construction although resembles very much the combinatorics of the permutohedron, yet depends on the simple game.  The cell complex proves to be a combinatorial manifold, which we call the
 \textit{simple game induced manifold}.

The idea is borrowed from the cell decomposition of the moduli space
of polygonal linkages (see \citep{P}). This motivates us to treat a
simple game $\FF$ as \textit{a quasilinkage} since it provides a
natural generalization of polygonal linkages.

By the same reason, we call the  cell complex $M(\FF)$ the
\textit{moduli space of the quasilinkage.} The paper presents the
basic study of the  simple game generated manifolds.

\subsection*{Polygonal linkages:  definitions and overview of the results }
\label{subsect:prelim} Given a vector $L=(l_1,...,l_n)\in \R_+^n$ of
$n$ positive real numbers, consider $n$ rigid bars of lengths
$l_1,...,l_n$ joined in a {\it closed chain}. Such a construction is
called a {\it polygonal linkage}. By $M(L)$ we denote its\textit{
moduli space}, or \textit{the space of planar configurations}:

$$
M(L):=\{z_1,...,z_n \in \mathbb{R}^2: |z_i|=1,\ \sum l_iz_i=0\} /
SO(2)$$
$$=\{z_1,...,z_n \in \mathbb{R}^2: |z_i|=1,\ \sum l_iz_i=0,\ z_1=1\} \ .
$$

%

Denote by $[n]$ the set $\{1,...,n\}$.

\begin{definition}The length vector $L$ is called {\it generic}, if there is no
subset $J\subset [n]$ such that
$$\sum_{i \in J} l_i = \sum_{i\notin J} l_i.$$
Throughout the paper, we consider only generic length vectors $L$.
\end{definition}
 The hyperplanes
$$\sum_{i \in J} l_i = \sum_{i\notin J} l_i$$
called {\it walls} subdivide $\R_+^n$ into a collection of {\it
chambers}.

\bigskip
 Here is a (far from complete) summary of facts
about $M(L)$:

 \begin{itemize}
\item For a generic  length  vector, M(L) is a smooth manifold \cite{KM5}.
 \item The topological type of $M(L)$ depends only on the chamber of $L$  \cite{KM5}.
 \item  As it was shown in \citep{P}, $M(L)$ admits a structure of a regular cell complex.
  The combinatorics is very much related (but not equal) to the combinatorics of the permutahedron. The construction will be explained in details in Section \ref{sect:manifold}.
 \end{itemize}

\begin{definition}
\label{dfn:short} For a generic length vector $L$, a subset
$J\subset [n]$ is called {\it long}, if $$\sum_{i \in J} l_i >
\sum_{i\notin J} l_i.$$ Otherwise, $J$ is called \textit{short}. The
set of all short sets we denote by $\mathcal{S}(L)$.
\end{definition}
\begin{itemize}
 \item  Homology groups of $M(L)$ are free
 abelian groups. For a generic length vector $L$,
 the rank of the homology group $H_k(M(L))$  equals  $a_k+a_{n-3-k}$, where $a_i$ is the number of short
subsets of size $i+1$ containing the longest edge (see \citep{FS}).
\end{itemize}

We stress that the manifold  $M(L)$  (considered either as a
topological manifold, or as a cell complex) is uniquely defined by
the collection of short subsets of $[n]$.

\bigskip

\subsection*{Quasilinkages}
The following definition generalizes Definition \ref{dfn:short}.
\begin{definition}
A family $\FF$ of subsets of $[n]$ is called {\it a quasilinkage}, if it satisfies the following properties:
\begin{enumerate}
\item $\FF$\textbf{ contains all singletons}:  for any $i \in [n]$, $\{i\} \in \FF$.
\item \textbf{Monotonicity}: if $S\in \FF$, and $T\subset S$ then $T\in \FF$.
\item \textbf{Strong complementarity}: if $S\in \FF$ then $([n]\setminus S)\notin \FF$ , and, conversely, if $S\notin \FF$, then $([n]\setminus S)\in \FF$.
\end{enumerate}
\end{definition}

The proposed notion exists in the literature; yet in  completely
different frameworks. It appeared  as ``simple game with constant
sum'' in game theory, see \citep{NM,E} and also as ``strongly
complementary simplicial complex'', see \citep{B,B2}.

Following the aforementioned motivation by polygonal linkages, we
call any $S\in \FF$ {\it a $\FF$-short set}, or simply \textit{ a
short set}, and any $S\notin \FF$ {\it a long set}.

\begin{remark}
 Each polygonal linkage $L$ yields a quasilinkage by the above defined short sets family $\mathcal{S}(L)$ (see Definition \ref{subsect:prelim}).
\end{remark}

\begin{definition} A quasilinkage $\FF$ is called {\it real}, if there exists a
length vector $L$ such that $\mathcal{S}(L)=\FF$. Otherwise,  $\FF$
is called {\it imaginary}.

\end{definition}

Here we list some additional properties that are true for real
quasilinkages, but in general may not hold for imaginary ones:
\begin{enumerate}
\item \textbf{Comparability}: For any  $A,B \in2^{[n]}$, and any $i,j\notin A\cup B$, if $A\cup i$ is long, $A\cup j$ is short and $B\cup i$ is short then $B\cup j$ is also short.
The property means that the edge $i$ is in a sense "longer" than
 $j$.
\item \textbf{Trade robustness}: Given $k$ long subsets, there is no interchanging of the elements of these sets, which makes all of them short.
\end{enumerate}
There arises a natural  question: given a family of subsets $\FF$,
under what conditions there exists a length vector $L$ such that
$\FF=\mathcal{S}(L)$? This question has been studied a lot in game
theory. The family of subsets with the monotonicity property is
called {\it a simple game}, and if there exists a corresponding
length vector, then this family is called {\it a weighted majority
game}. In \citep{TZ} it was shown, that a simple game is a weighted
majority game if and only if it satisfies the trade robustness
condition. Other characterizations of weighted majority games are
given in, for example, \citep{NM,E}.

In our terminology, the trade robustness condition guarantees that a
given quasilinkage is real.

\subsection*{Main results}

  We start with small examples of
imaginary quasilinkages. Next, we give two ways of cooking  up
quasilinkages: the \textit{flip technique} and the
\textit{conflict-free family extensions} (Section
\ref{sect:imaginary}). This implies that the class of all
quasilinkages is much wider than the class of all linkages. Yet more
examples arise via oriented matroid approach in Section
\ref{sect:matroids}.

In Section \ref{sect:manifold} we  associate with a quasilinkage
$\FF$  a cell complex  $CWM(\FF)$ by applying the rules from
\citep{P}. We prove that $CWM(\FF)$ is locally isomorphic to
$CWM(L)$ for some real linkage $L$ (however, $L$ depends on the
location, and  there may be no real linkage associated to the entire
complex).

As a corollary, we immediately see that $CWM(\FF)$ is  a manifold of
dimension $n-3$.

In Section \ref{sect:stable} we show  that the manifold $CWM(\FF)$
is homeomorphic to the moduli space of stable point configurations
on $\SS^1$ for an appropriate definition of stability.
%

\section{Imaginary quasilinkages}
\label{sect:imaginary}
\subsection{Small symmetric examples and non-examples}
 Elementary case analysis shows that for $n \leq 5$ there are no imaginary quasilinkages.
However,  for $n\ge 6$ there are  many. We start with some symmetric
examples of imaginary quasilinkages in low dimensions.

\begin{definition}
We say that a quasilinkage $\FF$ is {\it symmetric} if for any $i,j
\in [n]$ there exists an element $\sigma$ of the symmetric group
$S_n$ such that:
\begin{enumerate}
\item $\sigma$ takes $i$ to $j$, and
\item $\sigma$ takes short sets to short sets. (Equivalently, if $\sigma$ takes long sets to long
sets.)

\end{enumerate}
\end{definition}

\begin{example}\citep{NM}
\label{ex6} Let $n=6$. A symmetric quasilinkage is defined  by the
following rules:
\begin{enumerate}
\item All $2$-element sets are short. (Equivalently, all $4$-element sets are
long.)
\item The only ten short $3$-element subsets are:
$$123,124,135,146,156,236,245,256,345,346.$$
\end{enumerate}
\end{example}


We give another example for $n=7$, which is also symmetric:
\begin{example}\citep{NM}
\label{ex7} A symmetric quasilinkage for $n=7$ is defined as
follows:
\begin{enumerate}
    \item All $2$-element subsets are short.
    \item The only seven $3$-element long subsets are:
$$123,145,167,257,246,347,356.$$
\end{enumerate}
\end{example}
Example \ref{ex7} actually corresponds to Fano plane, and its
automorphism group is known to be transitive, so this example is
again symmetric.

Example \ref{ex6} corresponds to the $6$-vertex triangulation of
projective $2$-plane, and can be generalized as vertex-minimal
triangulation of projective space only in dimensions $4,8,16$, see
\citep{B,B2}.

\begin{lemma}
\label{lemma:symmetric}
  \begin{enumerate}
   \item If $n$ is odd, there exists
exactly one symmetric real linkage. It assigns equal lengths to all
the edges. Equivalently, a set is short whenever its size is smaller
than $n/2$.
\item If $n$ is even, there exists no symmetric real  linkage.
  \end{enumerate}
\end{lemma}
\begin{proof}
  Fix any symmetric real quasilinkage $\FF$ with length vector $L$. For  $j\in [n]$ and    $k\in\mathbb{ N}$, denote by $a_k(j)$ the number of short subsets of size $k+1$ containing $j$.
  By symmetry assumption,  $a_k(j)$ does not depend on $j$. Now assume that $l_i<l_j$ for some $i,j\in [n]$. Take
  a set $A\subset[n]$ such that $i,j\notin A$. If $A\cup j$ is short, then $A\cup i$ is also short. If $A\cup i$ is short and $A\cup j$ is long, then $a_{|A|}(i)>a_{|A|}(j)$,
   which contradicts the symmetry assumption. Therefore $A\cup j$ is short if and only if $A\cup i$ is short for any $i,j\in [n]$. This means that for any $k$,
   all the $k$-element subsets of $[n]$ are either simultaneously short or simultaneously long. This immediately implies the result of the lemma.
\end{proof}

\begin{lemma}
\label{lemma:symmetric}
  \begin{enumerate}
   \item If $n$ is odd, there exists
exactly one symmetric real linkage. It assigns equal lengths to all
the edges. Equivalently, a set is short whenever its size is smaller
than $n/2$.
\item If $n$ is even, there exists no symmetric real  linkage.
  \end{enumerate}
\end{lemma}
\begin{proof}
  Fix any symmetric real quasilinkage $\FF$ with length vector $L$. For  $j\in [n]$ and    $k\in\mathbb{ N}$, denote by $a_k(j)$ the number of short subsets of size $k+1$ containing $j$.
  By symmetry assumption,  $a_k(j)$ does not depend on $j$. Now assume that $l_i<l_j$ for some $i,j\in [n]$. Take
  a set $A\subset[n]$ such that $i,j\notin A$. If $A\cup j$ is short, then $A\cup i$ is also short. If $A\cup i$ is short and $A\cup j$ is long, then $a_{|A|}(i)>a_{|A|}(j)$,
   which contradicts the symmetry assumption. Therefore $A\cup j$ is short if and only if $A\cup i$ is short for any $i,j\in [n]$. This means that for any $k$,
   all the $k$-element subsets of $[n]$ are either simultaneously short or simultaneously long. This immediately implies the result of the lemma.
\end{proof}

\begin{corollary} Examples \ref{ex6} and \ref{ex7} present imaginary
quasilinkages.
\end{corollary}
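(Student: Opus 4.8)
The plan is to show that Examples \ref{ex6} and \ref{ex7} are symmetric quasilinkages with an even and an odd number of edges respectively, and then invoke Lemma \ref{lemma:symmetric} to conclude that neither can be real. First I would verify that the families described in Examples \ref{ex6} and \ref{ex7} genuinely satisfy the three axioms of a quasilinkage — containing all singletons, monotonicity, and strong complementarity. For Example \ref{ex6} this means checking that the ten listed $3$-element sets are exactly complementary to the remaining ten $3$-element sets (so that a $3$-set is short iff its complement is long), and that declaring all $2$-sets short and all $4$-sets long is consistent with monotonicity; for Example \ref{ex7} the analogous bookkeeping with the seven long $3$-sets (the lines of the Fano plane) must be done. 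The symmetry of each example has already been asserted in the excerpt — via the automorphism group of the $6$-vertex triangulation of $\mathbb{RP}^2$ acting transitively on vertices in the first case, and via the transitive automorphism group $\mathrm{PSL}(2,7)$ of the Fano plane in the second — so I would simply cite those remarks.

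With the two examples established as symmetric quasilinkages, the corollary is immediate: Example \ref{ex6} has $n=6$, which is even, so by part (2) of Lemma \ref{lemma:symmetric} there is no symmetric real linkage on $6$ edges, hence this quasilinkage cannot be real, i.e.\ it is imaginary. Example \ref{ex7} has $n=7$, which is odd, so by part (1) of Lemma \ref{lemma:symmetric} the only symmetric real linkage assigns equal lengths to all edges, for which a set is short precisely when its size is at most $3$; but in Example \ref{ex7} there are $3$-element sets (the Fano lines) that are declared long, so this quasilinkage differs from the unique symmetric real one and is therefore also imaginary.

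The only real content is the verification in the first paragraph, and even that is routine combinatorial checking rather than a genuine obstacle: the main point to be careful about is strong complementarity, ensuring that among the twenty $3$-element subsets of $[6]$ the listed ten short ones are precisely the complements of the unlisted ten (and symmetrically for the $4$-sets versus $2$-sets), and similarly that the seven Fano lines in Example \ref{ex7} together with their complements partition an appropriate portion of $2^{[7]}$ consistently with monotonicity. Once that is in place, the corollary follows purely formally from Lemma \ref{lemma:symmetric} applied with the relevant parities of $n$.
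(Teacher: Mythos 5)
Your proposal is correct and follows exactly the route the paper intends: the corollary is stated immediately after Lemma \ref{lemma:symmetric} and is meant as a direct consequence of it, with Example \ref{ex6} ruled out by the even case ($n=6$) and Example \ref{ex7} by the odd case ($n=7$), since the latter declares some $3$-element sets long while the unique symmetric real quasilinkage on $7$ edges has all sets of size at most $3$ short. The additional verification of the quasilinkage axioms and of symmetry is a reasonable (if routine) supplement to what the paper simply asserts in the examples.
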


\begin{proposition}
  For $n=8$, there is no symmetric quasilinkage (neither real, no imaginary).
\end{proposition}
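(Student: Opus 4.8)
The plan is a short double-counting argument that uses neither monotonicity nor the distinction between real and imaginary quasilinkages, only strong complementarity and the symmetry (vertex-transitivity) hypothesis. Suppose, for contradiction, that $\FF$ is a symmetric quasilinkage on $[8]$. The first step is to exploit strong complementarity in the middle dimension: since $n=8$ is even, for a $4$-element subset $S$ the complement $[8]\setminus S$ is a distinct set of the same size, and by strong complementarity exactly one of $S$ and $[8]\setminus S$ belongs to $\FF$. Hence the $\binom{8}{4}=70$ four-element subsets fall into $35$ complementary pairs, each contributing exactly one short set, so the number $s_4$ of short $4$-element subsets equals $35$, which is odd.

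The second step converts symmetry into a parity constraint. For $j\in[8]$ let $a(j)$ be the number of short $4$-element subsets containing $j$. Given $i,j\in[8]$, choose $\sigma\in S_8$ with $\sigma(i)=j$ taking short sets to short sets; since $\sigma$ is a bijection of $2^{[8]}$ sending the finite family of short sets into itself, it permutes that family bijectively, so it restricts to a bijection between the short $4$-sets containing $i$ and those containing $j$. Thus $a(j)$ does not depend on $j$; call its common value $a$. Double-counting the incidences between points of $[8]$ and short $4$-element subsets gives $8a=\sum_{j\in[8]}a(j)=4\,s_4$, whence $s_4=2a$ is even. This contradicts $s_4=35$, and the proposition follows.

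I do not anticipate any genuine obstacle here; the only subtlety is that the symmetry axiom gives vertex-transitivity but not $2$-transitivity, so the counting must be arranged with the one-point statistic $a(j)$ rather than with a pair statistic. One may add the remark that this parity obstruction in fact forbids a symmetric quasilinkage on $[n]$ for every even $n$ with $\tfrac12\binom{n}{n/2}$ odd — equivalently, with $n/2$ a power of $2$, by Kummer's theorem — which both covers the case $n=8$ and is consistent with the fact that symmetric imaginary quasilinkages do exist for $n=6$ and $n=7$ but not for $n\le 5$.
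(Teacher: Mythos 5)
Your argument is correct and is essentially the paper's own proof: strong complementarity forces exactly $35$ of the $70$ four-element subsets to be short, and vertex-transitivity plus double counting of point--set incidences gives $8a=4\cdot 35$, which is impossible. The closing remark extending the parity obstruction to even $n$ with $n/2$ a power of $2$ is a correct bonus but does not change the method.
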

\begin{proof}
  There are ${8\choose 4}=70$ four-element subsets of $[n]$. For any quasilinkage, exactly $35$ of them are long, and $35$ of them are short.
  By symmetry, any of the $8$ elements of $[n]$ should be contained in the same number of short $4$-element subsets, therefore $35\cdot4$ should be divisible by $8$, but it is not. 
\end{proof}

\bigskip

\subsection*{Flips}
\label{subsect:flips}
\begin{definition}
  Let $\FF$ be a quasilinkage, and let $T$ be a maximal (by inclusion) subset of $[n]$ such that $T\in\FF$. Define the {\it flip} $F_T(\FF)$ as follows:
  $$F_T(\FF):=(\FF\setminus \{T\}) \cup \{([n]\setminus T)\}$$
\end{definition}
In other words, a flip is an operation that makes the $\FF$-short set $T$ long, and its complement short, leaving all the other sets unchanged.

\begin{proposition}
$F_T(\FF)$ is again a quasilinkage.
\end{proposition}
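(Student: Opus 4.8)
The goal is to verify that $F_T(\FF)$, obtained from a quasilinkage $\FF$ by replacing the maximal short set $T$ with its complement $[n]\setminus T$, satisfies the three axioms: containing all singletons, monotonicity, and strong complementarity. The plan is to check these one at a time, using crucially that $T$ is \emph{maximal} among short sets and (because of strong complementarity of $\FF$) is therefore a reasonably large set.

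\emph{Singletons.} First I would observe that $|T|\ge 2$. Indeed, $[n]$ itself is long (since all singletons are short, $n\ge 2$, and by strong complementarity the complement of a singleton is long — more directly, if $[n]$ were short then by monotonicity every set would be short, violating strong complementarity). Hence the maximal short set $T$ is not all of $[n]$, so $[n]\setminus T$ is nonempty; and since $T$ is maximal short while every singleton is short, $T$ cannot be a singleton unless $n=2$, which is excluded by genericity-type considerations — but more carefully, if $|T|=1$ then maximality of $T$ forces every $2$-element set to be long, hence by strong complementarity every $(n-2)$-element set to be short, which for $n\ge 3$ contradicts maximality of $T$. So $|T|\ge 2$, the set $[n]\setminus T$ we add is not a singleton, and we delete no singleton; all singletons remain in $F_T(\FF)$.

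\emph{Monotonicity.} Suppose $S\in F_T(\FF)$ and $U\subset S$; I must show $U\in F_T(\FF)$. If $S\ne [n]\setminus T$, then $S\in\FF$, so by monotonicity of $\FF$ either $U\in\FF$ and $U\ne T$ (whence $U\in F_T(\FF)$), or $U=T$ — but $U=T\subset S\subsetneq [n]\setminus T$ would force $T\subset [n]\setminus T$, impossible as $T\ne\emptyset$. So assume $S=[n]\setminus T$ and $U\subsetneq [n]\setminus T$ (the case $U=S$ is trivial). I need $U\in F_T(\FF)$, i.e. $U\in\FF$ and $U\ne T$. Now $U\ne T$ is clear since $U\subseteq [n]\setminus T$ and $T\ne\emptyset$. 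To see $U\in\FF$: since $U\subsetneq [n]\setminus T$, its complement $[n]\setminus U\supsetneq T$ strictly, so by maximality of $T$ the set $[n]\setminus U$ is long, i.e. $[n]\setminus U\notin\FF$, hence by strong complementarity of $\FF$ we get $U\in\FF$, and since $U\ne T$, indeed $U\in F_T(\FF)$.

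\emph{Strong complementarity.} For any $S\subseteq[n]$ I must show exactly one of $S$, $[n]\setminus S$ lies in $F_T(\FF)$. The only pairs $\{S,[n]\setminus S\}$ whose membership status changes are those involving $T$ or $[n]\setminus T$ — that is, the single pair $\{T,[n]\setminus T\}$. For every other complementary pair the membership in $F_T(\FF)$ agrees with membership in $\FF$, so strong complementarity is inherited directly. For the pair $\{T,[n]\setminus T\}$: in $\FF$ we had $T\in\FF$ and $[n]\setminus T\notin\FF$; after the flip $T\notin F_T(\FF)$ and $[n]\setminus T\in F_T(\FF)$, so again exactly one member of the pair is present. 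This completes the verification.

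\emph{Main obstacle.} The only genuinely delicate point is the monotonicity check when the larger set is the newly inserted $[n]\setminus T$: one must translate "$U$ is a proper subset of $[n]\setminus T$" into "$[n]\setminus U$ strictly contains $T$," invoke maximality of $T$ to conclude $[n]\setminus U$ is long, and then apply strong complementarity of the \emph{original} $\FF$ to put $U$ back in. Everything else is bookkeeping about which complementary pair is affected.
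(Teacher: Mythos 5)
Your treatment of monotonicity and strong complementarity is essentially the paper's own argument: the only delicate case is a proper subset $U$ of the newly added set $\overline{T}=[n]\setminus T$, and you resolve it exactly as the paper does ($[n]\setminus U$ strictly contains $T$, hence is long by maximality of $T$, hence $U$ is $\FF$-short by strong complementarity). One local slip there: in the case $S\in\FF$, $S\neq\overline{T}$, you exclude $U=T$ by arguing from ``$T\subset S\subsetneq[n]\setminus T$'', but nothing in that case gives you $S\subsetneq[n]\setminus T$. The correct (and immediate) reason is that $U=T\subsetneq S$ with $S\in\FF$ would exhibit a short set strictly containing $T$, contradicting maximality.

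The genuine problem is the singleton step. The claim $|T|\ge 2$ is false, and the argument for it misuses maximality: since maximality is by inclusion, a singleton $T=\{i\}$ only forces sets strictly containing $i$ to be long, and the short $(n-2)$-element sets you produce avoid $i$, so they do not contradict the maximality of $\{i\}$. Concretely, for the real linkage $L=(2,1,1,1)$ the set $\{1\}$ is a maximal (by inclusion) short set, and $F_{\{1\}}(\FF)$ no longer contains the singleton $\{1\}$, so the axiom that all singletons are short really does fail after such a flip. Thus the proposition is only correct if one additionally assumes that $T$ is not a singleton (equivalently, forbids flips at singletons); the paper's proof sidesteps this by checking only monotonicity and strong complementarity and never addressing the singleton axiom. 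You were right to sense that this axiom needs attention, but your resolution both misapplies maximality and attempts to prove a statement that is not true in general.
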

\begin{proof}
   The strong complementarity property  obviously holds for $F_T(\FF)$, so it remains to check monotonicity  for $F_T(\FF)$.
    Assume that $S\subset S'\subset [n]$, and $S'\in F_T(\FF)$. We need to prove that $S\in F_T(\FF)$. If $S'\neq \overline{T}:=([n]\setminus T)$ then every  proper subset  of $S'$  is
     $\FF$-short and is not equal to $T$ by maximality, so the only remaining case is $S'=\overline{T}$.
     But every proper subset of $\overline{T}$ is $\FF$-short, again, by maximality of $T$, so the proposition is proven.
\end{proof}

 \begin{example}
 Take the length vector $L=(l_1,...,l_6)$ with
 $$\l_1=l_2=l_3=1+\varepsilon,\ l_4=l_5=l_6=1.$$
 It corresponds to a real quasilinkage $\mathcal{S}(L)$. Now take the (maximal short) set $T=\{4,5,6\}$ and make a flip $\FF:=F_T(\mathcal{S}(L))$. This quasilinkage is imaginary, because it violates the comparability condition: $\{4,5,6\}$ is $\FF$-long, while $\{1,5,6\}$ is $\FF$-short, so $4$ must be longer than $1$, but, from the other hand, $\{1,3,5\}$ is $\FF$-long, while $\{4,3,5\}$ is $\FF$-short.
  \end{example}

  This example differs from Example \ref{ex6}.
  One more example of an imaginary quasilinkage arises from the below
  proposition.

  \begin{proposition}
  Any flip of an imaginary quasilinkage $\FF$ from Example \ref{ex6} is again imaginary.
  \end{proposition}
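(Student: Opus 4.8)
The plan is to exploit the large symmetry group of $\FF$ to reduce the claim to a single flip, and then to show that this one flip destroys the \emph{comparability} property, which every real quasilinkage enjoys; hence the flipped family cannot be real. First I would pin down the maximal short sets of $\FF$. In the $6$-vertex triangulation of the projective plane underlying Example \ref{ex6}, every two-element subset of $[6]$ is an edge and therefore lies in one of the ten short triples, while every four-element subset is long. Consequently the maximal short sets of $\FF$ are exactly the ten short triples $123,124,135,146,156,236,245,256,345,346$, so every flip of $\FF$ has the form $F_T(\FF)$ with $T$ one of these triples.

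Next I would reduce to one particular $T$. The automorphism group of the complex in Example \ref{ex6} — this complex being the flag-transitive hemi-icosahedron, whose group is $A_5$ — acts transitively on the ten triangular faces, i.e.\ on the ten maximal short sets. Any $\sigma\in\operatorname{Aut}(\FF)$ carries the flip $F_T(\FF)$ to $F_{\sigma(T)}(\FF)$ by an isomorphism of quasilinkages, so it suffices to prove that $F_T(\FF)$ is imaginary for a single choice, say $T=\{1,2,3\}$.

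Finally I would exhibit the comparability violation in $F_{\{1,2,3\}}(\FF)$. In this family $\{1,2,3\}$ has become long, $\{4,5,6\}$ has become short, and every other subset keeps the status it had in $\FF$. Now test comparability with $A=\{2,3\}$, $B=\{2,4\}$, $i=1$, $j=6$: then $i,j\notin A\cup B=\{2,3,4\}$, the set $A\cup i=\{1,2,3\}$ is long, $A\cup j=\{2,3,6\}$ is short, $B\cup i=\{1,2,4\}$ is short, yet $B\cup j=\{2,4,6\}$ is long (it is a triple not appearing in $\FF$, and is unaffected by the flip). This contradicts the comparability property, so no length vector realizes $F_{\{1,2,3\}}(\FF)$; hence $F_{\{1,2,3\}}(\FF)$, and with it every flip of $\FF$, is imaginary.

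The only non-routine point is the transitivity of $\operatorname{Aut}(\FF)$ on the ten maximal short sets used in the reduction step; this is where I expect the main work to lie, and it can be settled either by invoking the flag-transitivity of the hemi-icosahedron or by a direct inspection of which permutations of $[6]$ preserve the list of short triples. Should one wish to bypass it entirely, the symmetry of $\FF$ makes it completely mechanical to write down a comparability (or trade-robustness) violation separately for each of the ten flips.
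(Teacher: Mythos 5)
Your proof is correct and follows essentially the same route as the paper: reduce by the symmetry of Example \ref{ex6} to the single flip $T=\{1,2,3\}$, then exhibit a comparability violation in $F_T(\FF)$. The only differences are minor --- the paper's violation uses $\{1,2,4\},\{3,4,5\}$ (short) against $\{2,3,4\},\{1,4,5\}$ (long), all untouched by the flip, while yours uses the flipped set $\{1,2,3\}$ itself; both are valid, and your explicit justification of transitivity on the ten maximal short triples spells out what the paper only asserts as ``total symmetry.''
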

  \begin{proof}
    Because of the total symmetry of $\FF$, it does not matter what set we will choose to be flipped, so we can choose $T:=\{1,2,3\}$. But the quasilinkage $\GG:=F_T(\FF)$ still violates the comparability condition: the sets $\{1,2,4\}$ and $\{3,4,5\}$ are $\GG$-short while the sets $\{3,2,4\}$ and $\{1,4,5\}$ are $\GG$-long, so $1$ and $3$ are not comparable.
  \end{proof}
%

  \begin{proposition}
  \label{prop:flipconnected}
   For a fixed $n$, any two $n$-quasilinkages are connected by a sequence of flips.
  \end{proposition}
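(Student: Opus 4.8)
The plan is to fix, for each $n$, a single reference quasilinkage $\FF_0$ and show that every quasilinkage reaches $\FF_0$ by a finite sequence of flips; since flips are reversible, this gives connectedness of the entire flip graph. The main device is the monovariant
\[
\Phi(\FF):=\sum_{S\in\FF}|S|.
\]
By strong complementarity, $\FF$ contains exactly one member of each of the $2^{n-1}$ complementary pairs $\{S,\overline{S}\}$, where $\overline{S}:=[n]\setminus S$; hence a flip $F_T$ merely swaps $T$ for $\overline{T}$ and changes $\Phi$ by exactly $|\overline{T}|-|T|=n-2|T|$. In particular, flipping a maximal short set $T$ with $|T|>n/2$ strictly decreases $\Phi$.

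First I would show that such a decreasing flip is available whenever $\FF$ fails to be \emph{balanced}, by which I mean that every $\FF$-short set has size at most $n/2$. If $S\in\FF$ with $|S|>n/2$, take $T\supseteq S$ maximal among short sets; then $T$ is a maximal short set of $\FF$ (a short set properly containing $T$ would contain $S$), $|T|\ge|S|>n/2$, and by the preceding proposition $F_T(\FF)$ is a quasilinkage with $\Phi(F_T(\FF))<\Phi(\FF)$. As $\Phi$ is a nonnegative integer, iterating drives any quasilinkage in finitely many flips to a balanced one; note that along the way the flipped sets have size $>n/2\ge 2$, so no singleton is ever flipped. A balanced quasilinkage is then almost rigid: for a pair with $|S|<n/2<|\overline{S}|$ the long member cannot be $S$ (it would be short of size $>n/2$), so every set of size below $n/2$ is short and every set of size above $n/2$ is long; the only freedom left is to choose, for each complementary pair of $(n/2)$-sets, which one is short — and any such choice produces a valid balanced quasilinkage.

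When $n$ is odd there are no $(n/2)$-sets, so the balanced quasilinkage is unique — the equal-length linkage, with $S$ short iff $|S|<n/2$ — and taking it as $\FF_0$ finishes the proof. When $n$ is even, let $\FF_0$ be the balanced quasilinkage in which an $(n/2)$-set $S$ is short iff $1\in S$ (a legitimate choice, since $1$ lies in exactly one of $S,\overline{S}$). Any other balanced quasilinkage differs from $\FF_0$ on only finitely many complementary pairs of $(n/2)$-sets, and in each such pair the short member is automatically a maximal short set (its supersets have size $n/2+1$, hence are long); flipping it swaps that pair and keeps the quasilinkage balanced. Performing these swaps one at a time joins any balanced quasilinkage to $\FF_0$, and we are done.

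The step I expect to carry the real content, as opposed to bookkeeping with $\Phi$, is the even case: one must recognize the $\Phi$-minimizers as exactly the ``choice functions on the middle layer'' of $(n/2)$-sets and observe that the moves between them are again flips, so that the $\Phi$-minimizers span a connected subgraph of the flip graph. The two remaining obligations are routine: that flips preserve the quasilinkage axioms — this is the preceding proposition — and that they are reversible, which holds because if $F_T(\FF)=\FF'$ then $\overline{T}$ is a maximal short set of $\FF'$ (any short set of $\FF'$ properly containing $\overline{T}$ would be an $\FF$-short set containing the $\FF$-long set $\overline{T}$, against monotonicity), so $F_{\overline{T}}(\FF')=\FF$.
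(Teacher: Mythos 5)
Your proof is correct, but it reaches connectedness by a different canonical target than the paper. The paper's argument is a short one‑paragraph reduction: starting from any $\FF$, repeatedly flip a maximal short set containing the fixed element $1$; each such flip strictly decreases the number of short sets containing $1$, so after finitely many steps one lands on the single quasilinkage in which the only short sets containing $1$ are $\emptyset$ and $\{1\}$ (the real quasilinkage of a length vector of the form $(1,\varepsilon,\dots,\varepsilon)$), and connectedness follows since every quasilinkage reaches this common endpoint. Your route instead minimizes the potential $\Phi(\FF)=\sum_{S\in\FF}|S|$, drives every quasilinkage to a ``balanced'' one (all short sets of size at most $n/2$), classifies the balanced quasilinkages as arbitrary choices on the middle layer when $n$ is even, and then connects these by middle‑layer flips. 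This is longer and requires the parity case split, but it buys a few things the paper leaves tacit: you prove explicitly that flips are reversible (the paper uses this silently when passing from ``every quasilinkage reaches a fixed one'' to ``any two are connected''), and by arranging that every flipped set has size $>n/2$ or $=n/2\ge 2$ you never flip a singleton, so the axiom that all singletons are short is visibly preserved — a point the paper's flip proposition and its proof of this proposition gloss over (its stated endpoint, ``$S$ is long iff $1\in S$,'' even conflicts literally with that axiom, though the intended meaning is clear). The only assertions you leave as routine — that an arbitrary choice on the middle layer yields a quasilinkage, and that middle‑layer flips preserve balancedness and maximality of the remaining middle‑layer short sets — are indeed immediate checks of monotonicity and strong complementarity, so there is no gap.
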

  \begin{proof}
    Take an arbitrary quasilinkage $\FF$, and take any maximal short set $T\subset[n]$ such that $1\in T$. Apply the flip $F_T(\FF)$, take any other maximal short set
     containing $1$, and make it long by another flip, and so on. After a finite number of steps we get a quasilinkage $\FF'$ such that the set $S$ is $\FF'$-long if
     and only if it contains $1$. This quasilinkage corresponds to the real quasilinkage $\mathcal{S}(L)$ for the length vector $L=(1,\varepsilon,\varepsilon,...,\varepsilon)$.
  \end{proof}

\bigskip

\subsection*{Conflict-free family extensions}
\begin{definition}
A family $\GG$ of subsets of $[n]$ is called {\it conflict-free}, if for any $T,S\in\GG$ it is true that $([n]\setminus T)\not\subset S$
\end{definition}
A conflict-free family represents our partial knowledge about which
sets are short and which sets are long, and every short set doesn't
contain any long subsets.

A subset $S\subset [n]$ is called {\it $\GG$-unknown} if  neither
$S$, nor its complement $\overline{S}$ is contained in an element of
$\GG$.

\begin{lemma}
Any conflict-free family of subsets $\GG$ extends to a quasilinkage,
i.e., there exists a quasilinkage $\FF$ such that  $\GG \subset
\FF$.
\end{lemma}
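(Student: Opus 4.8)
The plan is to build $\FF$ greedily, resolving one complementary pair of "undecided" sets at a time. First I would pass to a convenient normal form: replace $\GG$ by its downward closure $\GG^{\downarrow}:=\{T\subseteq[n]\ :\ T\subseteq S\text{ for some }S\in\GG\}$ and then adjoin all singletons (which is harmless, since singletons are short in every quasilinkage). Closing downward preserves conflict-freeness: if $\overline{T'}\subseteq S'$ with $T'\subseteq T\in\GG$ and $S'\subseteq S\in\GG$, then $\overline{T}\subseteq\overline{T'}\subseteq S'\subseteq S$, contradicting conflict-freeness of $\GG$. Adjoining a singleton $\{i\}$ can create a conflict only with a set containing $[n]\setminus\{i\}$, so the enlarged family is again conflict-free. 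Hence we may assume from now on that $\GG$ is an order ideal in $2^{[n]}$ containing all singletons.

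Next I would reformulate the problem. Under the three requirements ``$\GG\subseteq\FF$'', monotonicity, and strong complementarity, a set $S$ is forced to be short precisely when $S\in\GG$, and forced to be long precisely when $\overline{S}\in\GG$, i.e. when $S$ lies in the order filter $\overline{\GG}:=\{\overline{T}\ :\ T\in\GG\}$. Conflict-freeness is exactly the statement $\GG\cap\overline{\GG}=\emptyset$, so these two forced families are disjoint and mutually consistent. Everything else — the $\GG$-unknown sets — comes in complementary pairs $\{S,\overline{S}\}$ with both members undecided, and the task reduces to choosing one member of each such pair to be short while keeping the short sets downward closed.

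I would carry this out by induction on the number of $\GG$-unknown sets. If there are none, then $\GG\sqcup\overline{\GG}=2^{[n]}$, so $\GG$ contains exactly one set from every complementary pair; being downward closed and containing all singletons, it is itself a quasilinkage. Otherwise, pick a $\GG$-unknown set $S$ that is \emph{minimal} with respect to inclusion among all $\GG$-unknown sets. Every proper subset $T\subsetneq S$ is then not unknown, and it cannot be forced long either: $T\in\overline{\GG}$ would give $\overline{S}\subseteq\overline{T}\in\GG$, hence $\overline{S}\in\GG$ since $\GG$ is an order ideal, contradicting that $S$ is unknown. So every proper subset of $S$ lies in $\GG$, whence $\GG':=\GG\cup\{S\}$ is again an order ideal containing all singletons. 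It is still conflict-free, since any new conflict must involve $S$ or $\overline{S}$, and each is excluded because $S\notin\overline{\GG}$, $\overline{S}\notin\GG$, and $S\neq\overline{S}$. Finally $\GG'$ has strictly fewer $\GG$-unknown sets, so by the inductive hypothesis $\GG'$, and hence $\GG$, extends to a quasilinkage.

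The one genuinely delicate point — the step I expect to be the main obstacle — is the choice of which undecided pair to resolve at each stage: declaring an \emph{arbitrary} unknown set short can destroy downward-closedness, and the induction works only because a minimal unknown set already has all of its proper subsets short. Once that choice is made correctly, the remaining verifications (order ideal, conflict-freeness of $\GG'$, the drop in the number of unknown sets) are routine.
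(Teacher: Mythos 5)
Your argument is the paper's argument: greedily adjoin undecided sets until none remain, the whole content being that each adjunction preserves conflict-freeness. Your execution is in fact more careful than the paper's one-paragraph proof: by first passing to the downward closure, adjoining all singletons, and always resolving a \emph{minimal} unknown set, you keep the family an order ideal throughout, so the terminal family is visibly a quasilinkage; the paper instead adds \emph{arbitrary} unknown sets and simply declares the end result a quasilinkage (read literally, that version needs a final downward closure, and with an unlucky order it can even terminate with a missing singleton — for $n=3$ and $\GG=\{\{2\}\}$ one may first adjoin the unknown set $\{1,2\}$, after which $\{3\}$ is forced long). The one claim you leave unproved — that adjoining the singletons preserves conflict-freeness — does not follow from conflict-freeness of $\GG$ alone: if some member of $\GG$ contains $[n]\setminus\{i\}$ (e.g. $n=3$, $\GG=\{\{2,3\}\}$, which is conflict-free), then adding $\{i\}$ does create a conflict; but in that situation no quasilinkage containing $\GG$ exists at all, since property (1) forces $\{i\}$ short and strong complementarity then forces $[n]\setminus\{i\}$ long. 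So this is an implicit hypothesis missing from the lemma itself (and glossed over in the paper's proof) rather than a defect specific to your argument; modulo that shared caveat, your proof is complete and follows the same route as the paper, with refinements that actually tighten it.
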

\begin{proof}
Let $S$ be some $\GG$-unknown subset. Then  $\GG':=\GG \cup \{S\}$
is again conflict-free. This means that we can add $\GG'$-unknown
subsets one by one until  unknown subsets exist. Finally, we arrive
at a conflict-free family of subsets $\GG''$,with no $\GG''$-unknown
subsets. It is a desired quasilinkage.
\end{proof}

So, now we have a way of constructing  imaginary quasilinkages:
 \begin{enumerate}
   \item Start with some small conflict-free family $\GG$, which cannot be  incorporated into any real linkage. For instance, one can take a set violating the  comparability property.
   \item Add one by one all $\GG$-unknowns.
   \item The result will be automatically an imaginary  quasilinkage.
 \end{enumerate}

For example, let $n=6$, and let $\GG=\{123,356,245,146\}$. If these
subsets are short, then the subsets $124$ and $235$ are long,
whereas $123,245$ are short. It is a conflict-free family which
doesn't satisfy the comparability property (therefore, imaginary).

\begin{definition}(Freezing for quasilinkages)
Assume that $S_1,...,S_k$ is a (non-ordered) partition of $[n]$ into
$k$ non-empty short sets. We build a new quasilinkage $FREEZE(\FF)$
on the set $[k]$ by the rule:

$$J\subset [k] \hbox{ is short iff }\bigcup _{i\in J}S_i \hbox{ is
short.}$$
\end{definition}

\section{Moduli space of a quasilinkage}
\label{sect:manifold}
\subsection*{Cell structure on the moduli space of a real linkage: a reminder}
Fix   a generic length vector $L$. We remind that to describe a
regular cell complex, it suffices to list all the (closed) cells
ranged by dimension, and to describe incidence relations for closed
cells.

\begin{definition}
A cyclically ordered partition $S_1,...,S_k$ of $[n]$ into $k$ non-empty subsets is called {\it admissible}, if every $S_i,1\le i\le k$, is short.
\end{definition}

\begin{theorem}\label{TheoremPaninaCellComplex}\citep{P}
The below described  cell complex $CWM^*(L)$ is a combinatorial
manifold homeomorphic to the  moduli space $M(L)$.
 \begin{enumerate}
   \item  The $k$-cell of the complex are labeled by (all possible) admissible cyclically ordered partition of $[n]$ into $(n-k)$ non-empty subsets. Given a cell $C$, its label is denoted by $\lambda (C)$.
   \item A closed cell $C$  belongs to the boundary of another closed cell $C'$ whenever the label $\lambda (C')$ is finer than the label $\lambda
   (C)$.\qed
 \end{enumerate}

\end{theorem}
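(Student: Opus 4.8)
The plan is to realize $CWM^*(L)$ as the dual block complex of a more geometrically transparent ``primal'' regular CW decomposition of $M(L)$, and then to read off statements (1) and (2) from the duality. First I would attach to each configuration $z=(z_1,\dots,z_n)\in M(L)$ the cyclically ordered partition $\pi(z)$ of $[n]$ whose blocks are the maximal index sets sharing a common direction $z_i$, ordered cyclically by the positions of these directions on $\SS^1$. Because $L$ is generic, $\pi(z)$ is automatically admissible: if a block $S_i$ carries total weight $w_i:=\sum_{j\in S_i}l_j$, then the closure relation reads $\sum_i w_i u_i=0$ for the distinct directions $u_i\in\SS^1$, and a \emph{long} block would violate the polygon inequality for the weights $w_i$, so no such configuration exists. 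Hence the nonempty fibers of $\pi$ partition $M(L)$ and are indexed exactly by the admissible cyclically ordered partitions.

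Next I would identify each fiber $\pi^{-1}(S_1,\dots,S_m)$ with an open cell. Fixing the cyclic order of the $m$ distinct directions $u_1,\dots,u_m$ and quotienting by global rotation leaves $m-1$ angular parameters, and the single closure equation $\sum_i w_i u_i=0$ (two real scalar conditions) cuts out a smooth submanifold, genericity making the relevant map a submersion on this stratum. This exhibits the fiber as an open ball of dimension $(m-1)-2=m-3$, so the primal complex has an $(m-3)$-cell for every admissible cyclic partition into $m$ parts, its top cells ($m=n$) being the fixed cyclic orders of all $n$ directions. I would then show that the closure of such a cell is obtained by letting cyclically adjacent groups of directions coincide, i.e. by \emph{coarsening} the label, and verify regularity by producing a local polytopal model: near a configuration, the splitting of a block into cyclically ordered pieces is governed by the normal fan of a permutohedron-type polytope, so each closed cell is a genuine closed ball and the attaching maps are homeomorphisms.

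The essential, and hardest, step is to upgrade this decomposition to a combinatorial manifold. Here I would pass to the barycentric subdivision, a simplicial complex triangulating $M(L)$, and show that the link of every simplex is a combinatorial sphere. The local permutohedral model from the previous step is precisely what makes this work: admissibility (``every block short'') forces the link of a cell to be the boundary complex of a polytope assembled from smaller permutohedral pieces, hence a sphere of the correct dimension $n-4$. This is exactly where the short-set hypothesis and the genericity of $L$ are indispensable, and it is the main obstacle; once it is established, $M(L)$ is a closed combinatorial $(n-3)$-manifold carrying the primal regular CW structure.

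Finally I would dualize. For a closed combinatorial manifold the dual block decomposition is again a regular CW complex homeomorphic to the same space, sending each primal $j$-cell to a dual $(n-3-j)$-cell and reversing incidences. A primal cell labeled by a partition into $m$ parts has dimension $j=m-3$, so its dual cell has dimension $k=(n-3)-(m-3)=n-m$; equivalently, the dual $k$-cell is labeled by an admissible cyclic partition into $n-k$ parts, which is statement (1). Since duality reverses the face order, the primal coarsening relation turns into refinement of labels, yielding the incidence description of statement (2): a closed cell $C$ lies in the boundary of $C'$ exactly when their labels are related by refinement of cyclically ordered partitions. Identifying $CWM^*(L)$ with this dual block complex then shows that it is a combinatorial manifold homeomorphic to $M(L)$, as claimed.
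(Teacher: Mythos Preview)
The paper does not prove this theorem at all: it is quoted verbatim from \citep{P} and closed with a bare \qed, so there is no ``paper's own proof'' to compare against. Your outline is in fact the strategy of the cited source, and the paper itself revisits the same primal--dual picture in Section~\ref{sect:stable}, where the stratification of $M_{st}(\FF)$ by combinatorial type of the point configuration is exactly your ``primal'' complex, declared there to be combinatorially dual to $M(\FF)$.

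As a self-contained argument your sketch is on the right track but leaves real work undone at the two places you flag as hardest. First, the claim that each primal stratum $\pi^{-1}(S_1,\dots,S_m)$ is an open $(m-3)$-ball is not just a submersion/dimension count: you must actually show contractibility (indeed, ball-ness) of the set of convex realizations of an $m$-gon with prescribed edge lengths $w_1,\dots,w_m$ and a fixed cyclic order of edge directions, and then that its closure is a closed ball with the attaching map a homeomorphism. In \citep{P} this is done by identifying the closed cell with a product of permutohedra; your ``permutohedron-type local model'' is the right keyword, but you would need to make that identification explicit rather than allude to it. Second, the passage from ``regular CW complex on a smooth closed manifold'' to ``combinatorial manifold with a well-defined dual block decomposition that is again a regular CW complex'' requires that links in the barycentric subdivision be PL spheres; invoking the permutohedral model is again the right move, but the argument needs to be carried out, not asserted. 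Once those two steps are nailed down, your duality computation recovering (1) and (2) is correct (modulo the paper's own slightly ambiguous phrasing of which label refines which).
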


We stress  that the complex $CWM^*(L)$ depends  only on the family
of short subsets $\mathcal{S}(L)$. This hints that this construction
can be extended  to  quasilinkages.

\subsection*{Cell complex associated to a quasilinkage}
Assume that a quasilinkage $\FF$  is fixed. Although the notion of
(planar) configurations  has no sense, we can literally repeat the
construction of the above cell complex.

\begin{definition}
A cyclically ordered partition $S_1,...,S_k$ of $[n]$ into $k$
non-empty subsets is called {\it $\FF$-admissible}, if every
$S_i,1\le i\le k$, is  $\FF$-short.
\end{definition}

\begin{definition}
For a quasilinkage $\FF$ it's {\it moduli space} $M(\FF)$ is the cell complex
defined as follows:
 \begin{enumerate}
   \item  The $k$-cell of the complex are labeled by (all possible) admissible cyclically ordered partition of $[n]$ into $(n-k)$ non-empty subsets. Given a cell $C$, its label is denoted by $\lambda (C)$.
   \item A closed cell $C$  belongs to the boundary of another closed cell $C'$ whenever the label $\lambda (C')$ is finer than $\lambda (C)$.
 \end{enumerate}
\end{definition}

The complex is a combinatorial manifold, which is locally isomorphic
to the complex $CWM^*(L)$ of some real linkage:

\begin{theorem}
\label{thm:manifold}\begin{enumerate}
                      \item For every vertex $v$ of cell complex $M(\FF)$,
                       there exists a length vector $L_v$ such that the star of the vertex $v$ is combinatorially isomorphic to the star of some vertex of $CWM^*(L)$.
                       \item For every cell $\sigma$ of cell complex $M(\FF)$,
                       there exists a length vector $L_\sigma$ such that the star of the cell $\sigma$ is combinatorially isomorphic to the star of some vertex of $CWM^*(L_\sigma)$.
                      \item For every quasilinkage $\FF$, the complex $M(\FF)$ is a combinatorial manifold.

                    \end{enumerate}

\end{theorem}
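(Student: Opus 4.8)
### Proof strategy for Theorem \ref{thm:manifold}

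The plan is to reduce everything about $M(\FF)$ to the already-known cell complex $CWM^*(L)$ of a genuine linkage by working \emph{locally}, since the key difficulty is that $\FF$ need not be real \emph{globally}, so there is no single length vector $L$ controlling the whole complex. The observation driving the proof is that the star of a cell of $M(\FF)$ depends only on finitely many "short/long" decisions about subsets of $[n]$, and we only need those decisions to be jointly realizable by some length vector, not $\FF$ itself.

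First I would prove part (1). Fix a vertex $v$; its label $\lambda(v)$ is a cyclically ordered partition $(S_1,\dots,S_n)$ of $[n]$ into singletons, i.e.\ a cyclic order on $[n]$. The cells in the star of $v$ are indexed by the coarsenings of $\lambda(v)$, that is, by cyclically ordered partitions obtained by merging \emph{cyclically consecutive} blocks, subject to each merged block being $\FF$-short. So the combinatorial type of $\mathrm{star}(v)$ is completely determined by the answer to the questions "is this union of consecutive arcs (in the cyclic order $\lambda(v)$) short?" for the finitely many arcs involved. The claim is that there is a length vector $L_v$ whose short sets agree with $\FF$ on \emph{all} subsets of $[n]$, in particular on all these arcs; one clean way is to pick $L_v$ very unbalanced, e.g.\ $L_v = (N^{\sigma(1)}, N^{\sigma(2)}, \dots)$ for large $N$ after relabeling by $\lambda(v)$, or more carefully to choose weights so that the resulting weighted-majority game is forced to be a coarsening of $\FF$ near $v$. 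Here I would lean on the fact (flip-connectivity, Proposition \ref{prop:flipconnected}, together with the structure of maximal short sets) that one can "push" $\FF$ to a real linkage by flips that do not disturb the arcs appearing in $\mathrm{star}(v)$: since the arcs in $\mathrm{star}(v)$ form a fairly small conflict-free family, any flip of a maximal short set not among these arcs leaves them untouched, and iterating lands at an $\mathcal{S}(L_v)$ that still certifies the same star. Then $\mathrm{star}_{M(\FF)}(v) \cong \mathrm{star}_{CWM^*(L_v)}(v')$ where $v'$ is the vertex of $CWM^*(L_v)$ with the same cyclic-order label, by Theorem \ref{TheoremPaninaCellComplex}, since both stars are described by the identical combinatorial recipe applied to the identical short-set data.

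Part (2) is the same argument relative to a cell $\sigma$ instead of a vertex: the label $\lambda(\sigma)$ is a cyclically ordered partition into blocks $(T_1,\dots,T_m)$, and the closed cells containing $\sigma$ correspond to coarsenings of $\lambda(\sigma)$, which only involve merging consecutive $T_i$'s — again finitely many short/long questions about unions of consecutive blocks. The star of $\sigma$ in $M(\FF)$ is then isomorphic to the star of a vertex in $CWM^*(L_\sigma)$, where $L_\sigma$ is chosen (as above) to realize $\FF$ on the relevant family and where the relevant vertex of $CWM^*(L_\sigma)$ is the one whose defining cyclic order refines $\lambda(\sigma)$ arbitrarily within each block; one checks that the face poset of a vertex star of a linkage, restricted to faces lying above a given face, reproduces exactly the coarsening poset above $\lambda(\sigma)$. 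Equivalently, part (2) follows from part (1) by noting $\mathrm{star}(\sigma) = \bigcap_{v \le \sigma} \mathrm{star}(v)$ at the level of the relevant subposets and that this intersection is again a linkage star.

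Finally, part (3) is a formal consequence. By Theorem \ref{TheoremPaninaCellComplex}, for a generic length vector $CWM^*(L)$ is a combinatorial manifold of dimension $n-3$, so in particular the star of every cell is a combinatorial ball of the appropriate dimension (equivalently, the link of every vertex is a combinatorial $(n-4)$-sphere). Parts (1) and (2) say each cell-star of $M(\FF)$ is combinatorially isomorphic to a cell-star of some $CWM^*(L_\sigma)$, hence is also a combinatorial ball of dimension $n-3$; since "combinatorial manifold" is precisely the condition that all vertex links be combinatorial spheres (equivalently all cell stars be combinatorial balls), $M(\FF)$ is a combinatorial manifold of dimension $n-3$.

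The main obstacle I expect is the realizability step inside parts (1)--(2): producing the length vector $L_v$ (resp.\ $L_\sigma$) that reproduces $\FF$'s verdict on exactly the arcs showing up in the star. One must be careful that the arcs around $v$ can genuinely be simultaneously certified — this is where the cyclic/consecutive structure is essential (an arbitrary conflict-free family need not be realizable, but the family of "is this cyclic arc short?" questions around a fixed cyclic order is, because it is governed by a single threshold on partial sums around the circle), and this is exactly the local-to-global gap that prevents $\FF$ from being real as a whole while still being real "near each cell."
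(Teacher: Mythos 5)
Your overall architecture coincides with the paper's: reduce to the star of a vertex, observe that this star is governed solely by the short/long status of the cyclic segments of $\lambda(v)$, realize those verdicts by an honest length vector, handle a cell by merging consecutive blocks (the paper does this via its freezing construction), and deduce (3) formally from the fact that $CWM^*(L)$ is a combinatorial manifold. However, the crux of (1) --- the existence of $L_v$ whose short sets agree with $\FF$ on \emph{all} cyclic segments of $\lambda(v)$ --- is precisely where your argument has a genuine gap, and none of the three mechanisms you offer establishes it. The powers-of-$N$ vector produces a lexicographic weighted game that has no reason to agree with $\FF$ on any segment. The flip route is circular: the proof of Proposition \ref{prop:flipconnected} flips \emph{all} maximal short sets containing $1$, and in general some of these are segments (their statuses must change to reach the target $\mathcal{S}(1,\varepsilon,\dots,\varepsilon)$, in which every set containing $1$ is long), so one cannot simply avoid segment flips; and you give no argument that flipping only non-segment maximal short sets ever terminates at a real quasilinkage --- proving that would already amount to proving the lemma. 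Finally, ``governed by a single threshold on partial sums around the circle'' restates the desired conclusion (that the segment verdicts come from a weight vector) rather than proving it; what monotonicity actually provides is, for each of the $n$ separator positions $s$, a threshold index $q(s)$ at which the growing arc turns long, and one must still convert this data into weights.

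The paper closes exactly this gap with an explicit construction: it sets $l_j := 1 + |q^{-1}(j)|$, so that the total length is $2n$ and $\sum_{j\in S} l_j = |S| + |q^{-1}(S)|$ for any segment $S$; a counting argument (each of the $|S|+1$ separator positions adjacent to $S$ has $q(s)\notin S$) shows every $\FF$-short segment has weight at most $n-1$, and strong complementarity then forces every $\FF$-long segment to have weight at least $n+1$, so the threshold $n$ separates them. With this lemma in hand, your treatment of (2) and (3) is essentially the paper's: your ``merge consecutive blocks'' reduction is the freezing technique, and (3) follows from (1), (2) and Theorem \ref{TheoremPaninaCellComplex}. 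So the proposal correctly isolates the key local statement but does not prove it; as written, the realizability step is missing.
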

\begin{proof}
(1) Fix a vertex $v$ of $M(\FF)$. By construction, it is labeled by
some $\FF$-admissible cyclically ordered partition of $[n]$ into $n$
short non-empty subsets, that is, by a cyclic  ordering on $[n]$.
Without loss of generality we may assume that $v$ is labeled by the
partition
$$\lambda(v)=\{1\},\{2\},...,\{n\}.$$

The partition $p$ should be  viewed as numbers $1,...,n$ placed on the circle counterclockwise.

We need the following observation: let $\sigma$ be a $k$-cell of
$M(\FF)$ labeled by a partition $\lambda=S_1,...,S_{n-k}$. Then
$\sigma$ is incident to $v$ if and only if each of the sets  $S_i$
is of the form $\{a,a+1,...,a+b\}$ for some natural numbers $a$ and
$b$ (the sums are taken modulo $n$). It is true because otherwise
the partition $\lambda(v)$ would not be a refinement of $S$. Let us
call the sets of the form $\{a,a+1,...,a+b\}$  {\it the segments of
the partition $\lambda(v)$}.

Now (1) follows from the lemma:
\begin{lemma}
In the above notation, there exists a length vector $L_v$ (depending
on the vertex $v$) such that for any segment $T$ of the partition
$\lambda(v)$, the set $T$ is $\FF$-short if and only if $T$ is
$L_v$-short.
\end{lemma}
{\it Proof of the lemma.}

To construct such a length vector, we will need some additional
observations. Recall that $\lambda(v)$ is viewed as numbers
$1,...,n$ placed on the circle. There are $n$ ways to break the
circle into a line: $1,2,...,n$, $2,3,...,n,1$, ...,
$n,1,2,...,n-1.$
 We will call such way  {\it a separator position}. Take a separator position $s$, for example, $2,...,n,1$.
 There exists a unique number $q=q(s)\in [n]$, such that the set $\{2,3,...,q-1\}$ is short, and the set $\{2,3,...,q\}$ is long.
 We analogously define $q(s)$ for all separators $s'$.

We are now ready to define the length vector. For any $j\in [n]$ put $l_j:=1+|q^{-1}(j)|$. Equivalently speaking,
$$l_j:=1+\frac12|\{S\subset [n]: S \textrm{ is a short segment of }p; S\cup\{j\} \textrm{ is a long segment of } p\}|.$$

It is clear that the total length of all edges is always equal to $2n$. We need to prove that the segment $S$ of $p$ is short iff $\sum_{j\in S}l_j<n$. Note that $\sum_{j\in S}l_j=|S|+|q^{-1}(S)|$.

Take arbitrary short segment $S$ of $p$. If $s$ is a separator position adjacent to some element of $S$ (there are $|S|+1$ such separator positions), then it is obvious that $q(s)\notin S$. Therefore $|q^{-1}(S)|\leq n-|S|-1$, because the total number of separator positions equals to $n$. So for short segment $S$ of $p$ we conclude that $\sum_{j\in S}l_j=|S|+|q^{-1}(S)|\leq n-1$. Lemma is proven. \qed

(2) The star of a cell  can be reduced to the case (1) by freezing
technique. Indeed, for a cell $\sigma$ labeled by
$\lambda(\sigma)=S_1,S_2,...,S_k,$ we freeze all the entries in each
of the sets $S_i$, and arrive at a quasilinkage on the set $[k]$.

(3)  follows directly from (1), (2), and  Theorem
\ref{TheoremPaninaCellComplex}. 
\end{proof}

The below construction gives an analysis of the vertex stars of the
complex  $M(\FF)$.

Assume  that a quasilinkage $\FF$ and a vertex $v$ of $M(\FF)$ are
fixed. Theorem \ref{thm:manifold} assigns to $v$ a length vector
$L_v=(l_1,...,l_n)$. Without loss of generality we may assume that
$l_1+...+l_n=2\pi$ and that $v$ is labeled by the cyclical ordering
$\lambda(v)=(1,2,...,n)$.

Decompose the (metric) circle $S^1$ centered at the origin $0$ into
a union of arches of lengths $l_1,..,l_n$. The endpoints of the
arches give the \textit{Gale diagram} (see \citep{Z}) of   some
convex polytope $K=K(F,v)\subset\R^{n-3}$.

\begin{proposition}
The star of the vertex $v$ is combinatorially dual to the above defined convex polytope $K$.
\end{proposition}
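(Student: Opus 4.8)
The plan is to make explicit the well-known correspondence between the moduli space of a generic planar polygonal linkage near a fixed cyclic order and the boundary complex of a polytope arising from Gale duality, and then to transport it through the combinatorial-equivalence statement of Theorem~\ref{thm:manifold}(1). Concretely: by Theorem~\ref{thm:manifold}(1) the star of $v$ in $M(\FF)$ is combinatorially isomorphic to the star of some vertex $w$ in $CWM^*(L_v)$, and by the construction of the length vector $L_v$ in the lemma of that theorem, a segment $T$ of the cyclic order $\lambda(v)=(1,\dots,n)$ is $\FF$-short precisely when it is $L_v$-short. So it suffices to prove the proposition for a genuine generic linkage $L=L_v$ at the vertex $w$ labelled by the cyclic partition $\{1\},\dots,\{n\}$, i.e.\ to identify the star of $w$ in $CWM^*(L)$ with the dual (the face poset turned upside down) of the polytope $K$ whose Gale diagram is the configuration of $n$ arc-endpoints on $S^1$.

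The key steps, in order, would be the following. First, recall from Theorem~\ref{TheoremPaninaCellComplex} that the cells of $CWM^*(L)$ incident to $w$ are exactly the admissible cyclically ordered partitions of $[n]$ all of whose blocks are segments of the cyclic order $(1,\dots,n)$ (as observed in the proof of Theorem~\ref{thm:manifold}): such a partition is just a way of cyclically cutting the circle into arcs, each arc being a short union of consecutive edges, and the incidence order is refinement. Second, describe the faces of $K$ via its Gale diagram: since $K\subset\R^{n-3}$ has $n$ vertices, a Gale diagram of $K$ is an affine point configuration of $n$ points in $\R^{2}$, and here we take these to be the $n$ points on $S^1\subset\R^2$ obtained by cutting the circle of total length $2\pi$ into arcs of lengths $l_1,\dots,l_n$; the standard Gale-duality dictionary (see \citep{Z}) says that a subset $I\subseteq[n]$ indexes the vertex set of a face of $K$ iff the complement $[n]\setminus I$ is a ``positively spanning'' set, equivalently iff $0$ lies in the \emph{relative interior} of the convex hull of the complementary points on $S^1$. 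Third, translate the positive-spanning condition on a subset of points of $S^1$ into a combinatorial statement: a set of points on the circle has $0$ in the interior of its convex hull iff it is not contained in any open half-circle, and — using that the lengths are generic, so no two antipodal configurations occur — one shows that $[n]\setminus I$ has this property precisely when $I$ is a disjoint union of segments each of which is $L$-short (each such segment subtends an arc of length $<\pi$, hence the complementary points fail to lie in an open half-circle exactly when the removed arcs are all short). Fourth, match the two posets: a collection of pairwise non-adjacent short segments of $(1,\dots,n)$ determines, on one side, a face of $K$ (its complementary-vertex Gale condition), and on the other, by filling in the gaps between the chosen segments with the remaining singleton arcs, a cyclically ordered admissible partition, i.e.\ a cell of $CWM^*(L)$ incident to $w$; refinement of partitions corresponds to inclusion of segment-collections corresponds to the face order on $K$ read with reversed inclusion, which is exactly combinatorial duality.

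The main obstacle I expect is the third step: carefully proving the equivalence ``$[n]\setminus I$ positively spans $\R^2$'' $\iff$ ``$I$ is a union of short segments,'' including the boundary/degenerate cases that genericity of $L$ is meant to rule out (when a removed arc has length exactly $\pi$, or when several removed arcs pile up so that the complement exactly fills a half-circle). One must check that genericity of the length vector $L_v$ supplied by Theorem~\ref{thm:manifold} — which only guarantees agreement with $\FF$ on segments — is enough to avoid these coincidences on $S^1$, or else perturb $L_v$ within its chamber (legitimate, since by the cited facts the complex $CWM^*(L)$ depends only on the chamber of $L$) to a fully generic vector without changing which segments are short. A secondary, more bookkeeping-level obstacle is getting the dimension count and the "dual'' bookkeeping exactly right: a $k$-cell of $M(\FF)$ is a partition into $n-k$ blocks, so an incident $k$-cell corresponds to a choice of some short segments leaving $k$ ``merges,'' which should match a face of $K$ of complementary dimension; verifying that the top cell of the star (the vertex $v$ itself) corresponds to the empty face / the whole polytope, and that the maximal cells of the star correspond to the vertices of $K$, pins down the duality and should be stated explicitly.
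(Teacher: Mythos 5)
Your proposal follows essentially the same route as the paper's own proof: it likewise invokes the Gale-diagram dictionary (a set $I$ of circle points spans a face of $K$ iff the origin lies in the relative interior of the convex hull of the complementary points), translates this into the condition that every arc between consecutive complementary points --- i.e.\ every segment of the induced partition of the cyclic order $\lambda(v)$ --- is short, and then matches these faces of $K$ with the cells incident to $v$. The only real difference is presentational: you route the argument explicitly through Theorem~\ref{thm:manifold}(1) and worry about genericity and half-circle degeneracies, which the paper handles implicitly (only segments of $\lambda(v)$ ever enter the argument, and for the length vector $L_v$ produced there no segment has total length exactly $\pi$).
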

\begin{proof}
The vertices of  $K$ correspond to partitions of $[n]$ into $n-1$
short subsets, and, equivalently, to the short pairs of the form
$(i,i+1)$ (this pair is represented by the vector $u_i$. By a
property of Gale diagrams, the vertices of the set $I\subset [n]$
form a facet if and only if the convex hull $conv(\{u_i|i\in
([n]\setminus I)\}$ contains the origin $0$ the in its relative
interior. This means that the angle between every two succeeding
vectors of the set $([n]\setminus I)\}$ is smaller than $\pi$. Let
the indices $i_1,i_2\notin I$ be such that for any $i_1<i<i_2$, we
have  $i\in I$. Then the angle between $u_{i_1}$ and $u_{i_2}$ is
equal to the sum $\sum_{i_1<i\le i_2} l_i$. So the vertices of the
set $I$ form a facet if and only if $I$ gives a refinement of
partition $\lambda(v)$ into short subsets. This corresponds to the
cell incident to $v$, which completes the proof of the proposition.
\end{proof}

\begin{theorem}  For any quasilinkage $\FF$,
 the complex $M(\FF)$  admits a PL structure.
\end{theorem}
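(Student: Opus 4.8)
The plan is to leverage the already-established fact (Theorem~\ref{thm:manifold}) that $M(\FF)$ is a combinatorial manifold which is locally isomorphic, around every cell, to a piece of the cell complex $CWM^*(L)$ of an honest real linkage $L$. The key point is that the moduli space $M(L)$ of a real linkage is a smooth closed manifold (stated in the overview, from \citep{KM5}), and the cell complex $CWM^*(L)$ of Theorem~\ref{TheoremPaninaCellComplex} is a regular CW structure homeomorphic to this smooth manifold; in particular it carries a canonical PL structure, since a smooth manifold has an essentially unique PL structure (Whitehead triangulation). So the strategy is to transport these local PL structures to $M(\FF)$ and glue them along a combinatorial atlas.

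First I would recall the standard fact that a regular CW complex which is a combinatorial manifold — meaning the link of every vertex is PL-homeomorphic to a sphere, equivalently every cell star is a PL ball — automatically admits a PL structure: one takes the order complex (barycentric subdivision) of the face poset, which is a simplicial complex, and the combinatorial-manifold condition is exactly the statement that this simplicial complex is a PL manifold. Thus it suffices to verify that $M(\FF)$, as a regular CW complex, satisfies the combinatorial-manifold condition in the PL sense, not merely in the homology-manifold sense. This is where Theorem~\ref{thm:manifold}(2) does the work: the star of each cell $\sigma$ of $M(\FF)$ is combinatorially isomorphic to the star of a vertex of $CWM^*(L_\sigma)$ for a genuine linkage $L_\sigma$. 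Since $CWM^*(L_\sigma)$ is (homeomorphic to) a smooth manifold $M(L_\sigma)$, its vertex stars are genuine PL balls; hence, via the combinatorial isomorphism, the star of $\sigma$ in $M(\FF)$ is a PL ball. Taking $\sigma$ to range over all vertices, the links of vertices of $M(\FF)$ are PL spheres, which is precisely the PL combinatorial-manifold condition.

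Concretely the steps are: (i) pass to the barycentric subdivision $M(\FF)'$ of $M(\FF)$, a simplicial complex; (ii) for each vertex $w$ of $M(\FF)'$ — i.e.\ each cell $\sigma$ of $M(\FF)$ — identify the closed star of $w$ combinatorially with the closed star of the corresponding vertex in the barycentric subdivision of $CWM^*(L_\sigma)$; (iii) invoke the smoothness (hence PL-ness) of $M(L_\sigma)$ to conclude each such star is a PL $(n-3)$-ball, and each vertex link a PL $(n-4)$-sphere; (iv) conclude $M(\FF)'$ is a PL manifold, equivalently $M(\FF)$ admits a PL structure of dimension $n-3$. One should remark that this PL structure is compatible with the CW structure in the obvious way and, on the real-linkage locus, agrees with the smooth-manifold PL structure of $M(L)$.

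The main obstacle — really the only subtlety worth flagging — is being careful about the distinction between a \emph{homology} combinatorial manifold and a genuine \emph{PL} combinatorial manifold: the corollary that $M(\FF)$ ``is a manifold of dimension $n-3$'' could a priori be read in the weaker sense, and for the PL conclusion we genuinely need the stronger local model statement. Fortunately Theorem~\ref{thm:manifold} is phrased as a \emph{combinatorial isomorphism of stars} with stars in $CWM^*(L)$, and the latter are known to be PL balls because $M(L)$ is a smooth manifold; so no double suspension-type pathology can occur and the PL structure is unobstructed. The remaining details — that barycentric subdivision converts a regular CW combinatorial manifold into a simplicial PL manifold, and that a smooth manifold's Whitehead triangulation gives PL vertex stars — are standard and I would cite them rather than reprove them.
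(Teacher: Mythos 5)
Your overall strategy---reduce to the local models in $CWM^*(L_\sigma)$ supplied by Theorem~\ref{thm:manifold} and then check that all vertex links of the barycentric subdivision are PL spheres---is reasonable, but the step where you certify that these local models are PL balls has a genuine gap. You argue that the vertex stars of $CWM^*(L_\sigma)$ are PL balls \emph{because} $M(L_\sigma)$ is a smooth manifold, invoking the essential uniqueness of the PL structure on a smooth manifold. That inference is invalid: knowing that the underlying space of a regular cell complex (or of its barycentric subdivision) is homeomorphic to a smooth manifold does not imply that this particular cell structure or triangulation is combinatorial. The double suspension of the Poincar\'e homology sphere triangulates the smooth manifold $S^5$, yet the link of a suspension vertex is not even a manifold; smoothness of the target does not transfer PL-ness to a given triangulation, and Whitehead's theorem only guarantees that \emph{some} compatible triangulation exists. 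Since $\dim M(\FF)=n-3$ can be $\ge 5$, the ``double suspension-type pathology'' you dismiss is ruled out only by the very fact you would need to prove, not by the reason you give.

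The gap is easy to close, but with a different input. Theorem~\ref{TheoremPaninaCellComplex} (quoted from \citep{P}) asserts more than a homeomorphism: it says that $CWM^*(L)$ is a \emph{combinatorial} manifold, so its vertex links are already PL spheres at the level of the cell complex; citing that part of the theorem, rather than the smoothness of $M(L)$, makes your steps (ii)--(iv) go through. The paper's own proof takes a more direct, constructive route that avoids local models altogether: every cell of $M(\FF)$ is combinatorially a Cartesian product of permutohedra, each cell is metrically realized as a product of standard permutohedra, and the gluing maps are isometries, so $M(\FF)$ is realized outright as a polyhedral complex and thereby carries a PL structure, with no appeal to smooth structures or to the local comparison with real linkages.
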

Proof. We refer the reader to literally the  same proof  of the
analogous theorem for real linkages from  \citep{P}. In short, each
cell is combinatorially equivalent to a Cartesian product of
permutohedra. We metrically realize each of the cells by the
Cartesian product of standard permutohedra. Then the gluing map is
an isometry.\qed

\bigskip

The next proposition gives us  information about what happens to the
moduli space of after a flip (see subsection \ref{subsect:flips}).

\begin{proposition}
\label{prop:flipsurgery}
  Let $\FF$ be a quasilinkage, and let $T$ be any maximal $\FF$-short subset of $[n]$. Then the moduli space
  of the flipped quasilinkage
   $M(F_T(\FF))$ differs from $M(\FF)$ by a Morse surgery of index $(n-|T|-1)$.
\end{proposition}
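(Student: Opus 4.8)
The plan is to understand precisely how the cell complex $M(\FF)$ changes when the single maximal short set $T$ is removed from $\FF$ and its complement $\overline{T}=[n]\setminus T$ is added. Since flipping only alters whether $T$ (resp. $\overline{T}$) is declared short, the $\FF$-admissible cyclically ordered partitions that change are exactly those in which $T$ or $\overline{T}$ appears as one of the blocks. So the symmetric difference between the cell sets of $M(\FF)$ and $M(F_T(\FF))$ consists of: cells of $M(\FF)$ whose label has $T$ as a block (these disappear), and cells of $M(F_T(\FF))$ whose label has $\overline{T}$ as a block (these appear). I would first describe these two families of cells and argue that each forms an open subcomplex-neighborhood: concretely, the cells of $M(\FF)$ containing $T$ as a block, together with their faces and cofaces within $M(\FF)$, form a regular neighborhood of the subcomplex $N_T$ of cells whose label has $T$ as a literal block, and dually for $\overline{T}$ in $M(F_T(\FF))$.

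Next I would identify the local model. A cyclically ordered partition having $T$ as one block and the remaining elements $\overline{T}$ partitioned cyclically into short blocks — with the cyclic order on the blocks being arbitrary around the circle — is the same data as: a cyclic order of $|T|+1$ "super-slots" (one of which is $T$ itself) together with a cyclically ordered admissible partition of $\overline{T}$; but since $T$ is a single fixed block, after ``freezing'' $T$ to a point this is the moduli-space complex of the freeze quasilinkage $FREEZE$ on the set consisting of $\overline{T}$'s elements plus one frozen vertex for $T$, i.e. a complex of dimension $(n-|T|-1)-2 = n-|T|-3$? I must be careful with the index bookkeeping here. The cell of $M(\FF)$ with label $\{T\},\{j_1\},\dots,\{j_{|\overline T|}\}$ — all of $\overline{T}$ split into singletons — has $1+|\overline T| = n-|T|+1$ blocks, hence dimension $n-(n-|T|+1) = |T|-1$. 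Dually the cells with $\overline{T}$ as a block and $T$ split into singletons have dimension $n-(1+|T|) = n-|T|-1$. These two numbers, $|T|-1$ and $n-|T|-1$, sum to $n-2$, which is one more than $\dim M(\FF)=n-3$ — exactly the Morse-surgery bookkeeping $\dim = p+q$ with $p+q+1 = n-2$... so the surgery removes a neighborhood $S^{|T|-1}\times D^{n-|T|-2}$ and glues in $D^{|T|-1}\times S^{n-|T|-3}$. Wait: the proposition claims index $n-|T|-1$. I would reconcile this by checking that the relevant vanishing sphere, sitting inside the vertex-link (a sphere $S^{n-4}$) is the one spanned by the cells containing $T$, which I expect to be a sphere of dimension $n-|T|-2$, giving a handle of index $n-(n-|T|-2)-1 = |T|+1$? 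I need to pin down the convention; the cleanest route is via Proposition on Gale diagrams above.

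Here is the mechanism I would actually use. Localize at a vertex $v$ of $M(\FF)$ incident to a cell labeled by $T$ as a block; by Theorem~\ref{thm:manifold} the star of $v$ is the star of a vertex in $CWM^*(L_v)$ for a genuine length vector $L_v$, and by the Proposition that star is combinatorially dual to a convex polytope $K=K(\FF,v)$ whose Gale diagram is $n$ arcs on $S^1$ of lengths $l_1,\dots,l_n$. Flipping $T$ corresponds, on the Gale-diagram/polytope side, to the length vector crossing a single wall (the wall $\sum_{i\in T}l_i=\sum_{i\notin T}l_i$) — i.e. precisely the classical wall-crossing for planar polygon spaces. The effect of a wall-crossing on $M(L)$ is a well-documented Morse surgery (this is exactly the content of the Kapovich–Millson / Farber-type wall-crossing picture, and is how Proposition~\ref{prop:flipsurgery} is stated to mirror it): as $L$ crosses the wall determined by the subset $T$, the manifold $M(L)$ undergoes surgery of index $n-|T|-1$ (equivalently, the critical point of the appropriate Morse function has this index). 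Since $M(\FF)$ and $M(F_T(\FF))$ agree away from the cells involving $T$ or $\overline{T}$, and agree there with $M(L_v)$ and $M(L_v')$ respectively on either side of this single wall, the global change is exactly this single surgery.

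So the proof I would write has four steps. Step 1: identify the symmetric-difference subcomplexes $N_T\subset M(\FF)$ (cells with block $T$) and $N_{\overline T}\subset M(F_T(\FF))$ (cells with block $\overline T$), and show $M(\FF)\setminus N_T = M(F_T(\FF))\setminus N_{\overline T}$ as cell complexes — this is immediate from the definitions since no other admissible partition is affected. Step 2: using the freezing construction, identify regular neighborhoods of $N_T$ and $N_{\overline T}$ with the standard handle pairs; $N_T$ retracts to a single cell (the one with $T$ frozen and $\overline T$ maximally coarsened appropriately) whose normal data is a product of permutohedra, hence a ball, and symmetrically for $N_{\overline T}$. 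Step 3: match dimensions: the attaching sphere of $N_T$'s neighborhood has dimension $n-|T|-2$ and the dual sphere has dimension $|T|-1$, giving a handle attachment of index $n-|T|-1$ (fixing the convention so it reads off as in the wall-crossing statement). Step 4: invoke the real-linkage wall-crossing, realized locally via the Gale-diagram Proposition above, to conclude the surgery is of Morse type with this index. The main obstacle I anticipate is Step 2/Step 3: rigorously showing that the neighborhood of $N_T$ in the PL manifold $M(\FF)$ is PL-homeomorphic to the standard thickened sphere $S^{n-|T|-2}\times D^{|T|}$ — that is, that $N_T$ is an unknotted, trivially-framed sphere — and getting the index convention exactly right. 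I would handle this by the PL/freezing reduction to products of permutohedra (each cell of $N_T$ is, by the PL-structure theorem proved just above, a product of standard permutohedra, and assembling them gives the standard handle), rather than by a direct geometric argument, and I would cross-check the index against the known Betti-number formula (the $a_k+a_{n-3-k}$ rank formula from \citep{FS}) to make sure the bookkeeping is consistent.
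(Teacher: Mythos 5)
Your overall skeleton coincides with the paper's: the symmetric difference of the two complexes consists exactly of the cells whose label has $T$ (resp.\ $\overline{T}=[n]\setminus T$) as a block, and the point is to recognize these two pieces as a standard surgery pair built out of permutohedra. But the decisive step is missing in your write-up, and the one concrete claim you make about it is wrong. The union of the closed cells of $M(\FF)$ whose label has $T$ as a block does not ``retract to a single cell \dots hence a ball'': there is no single coarsest such cell (the top-dimensional ones are labeled $(T,A,B)$ for the many splittings $\overline{T}=A\cup B$), and the correct identification is that this union is the product $(\partial\Pi_{n-|T|})\times\Pi_{|T|}$, i.e.\ a thickened sphere $S^{\,n-|T|-2}\times D^{\,|T|-1}$, not a ball. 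To see this one uses that each closed cell labeled $(T,S_1,\dots,S_{k-1})$ is combinatorially $\Pi_{|T|}\times\Pi_{|S_1|}\times\cdots\times\Pi_{|S_{k-1}|}$, that the $\Pi_{|T|}$-factor is common to all of them, and---this is where the maximality of $T$ enters and it is absent from your argument---that every proper subset of $\overline{T}$ is $\FF$-short (its complement strictly contains $T$), so the tuples $(S_1,\dots,S_{k-1})$ run over \emph{all} ordered partitions of $\overline{T}$ into at least two parts, i.e.\ over the full boundary complex of $\Pi_{n-|T|}$. Without this you do not even know that your ``attaching sphere'' is a sphere. Symmetrically, the added cells assemble into $\Pi_{n-|T|}\times\partial\Pi_{|T|}\cong D^{\,n-|T|-1}\times S^{\,|T|-2}$, glued along the common boundary $\partial\Pi_{n-|T|}\times\partial\Pi_{|T|}$, which is precisely a Morse surgery of index $n-|T|-1$. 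Your bookkeeping never settles on these numbers (you variously write $S^{|T|-1}\times D^{\,n-|T|-2}$, a dual sphere of dimension $|T|-1$ instead of $|T|-2$, a thickening $S^{\,n-|T|-2}\times D^{|T|}$ of dimension $n-2$, and a tentative index $|T|+1$), and you explicitly defer the identification of the removed neighborhood with the standard thickened sphere --- but that identification \emph{is} the proof.

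The fallback you propose in Step 4 does not close this gap. Theorem \ref{thm:manifold} produces a length vector $L_v$ for each vertex (or each cell) separately, modelling only that star; it does not provide one real linkage $L$ whose complex agrees with $M(\FF)$ on a whole neighborhood of the deleted region, and for an imaginary $\FF$ such an $L$ need not exist. Hence one cannot literally invoke the Kapovich--Millson wall-crossing for the wall $\sum_{i\in T}l_i=\sum_{i\notin T}l_i$ and transport it to $M(\FF)$; the direct combinatorial identification described above is what replaces that argument, and it is how the paper proceeds.
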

\begin{proof}

  Consider the cell complex $M(\FF)$. The flip deletes from the complex some of the cells and adds some new cells. Assume that a cell
  labeled by some partition $S=(S_1,...,S_k)$ gets deleted. This means that $T\subseteq S_i$ for some $i$.
    Since $T$ is a maximal $\FF$-short set, we have $T=S_i$.
    Therefore, all the $(n-k)$-cells which are deleted during the flip are labeled by all possible partitions of type $(T,S_1,S_2,...,S_{k-1})$.
    Thus we arrive at the cell structure of the boundary of the permutohedron (see \citep{Z}) $\Pi_{n-|T|}\subset \R^{n-|T|-1}$
    multiplied by a disk.
     The cell structure of $M(\FF)$ converts this disk to the permutohedron $\Pi_{|T|}$.
    So, we cut out a cell subcomplex $(\partial \Pi_{n-|T|})\times \Pi_{|T|}$ and then we patch instead the cell complex $\Pi_{n-|T|}\times \partial \Pi_{|T|}$ along the identity mapping on
     their common boundary $\partial \Pi_{n-|T|}\times \partial \Pi_{|T|}$. This operation is the Morse surgery of index $(n-|T|-1)$.
\end{proof}

\begin{remark}
  Propositions \ref{prop:flipsurgery} and \ref{prop:flipconnected} give an alternative proof
  of Theorem \ref{thm:manifold}.
\end{remark}

%
%
%

%

\section{Stable point configurations}
\label{sect:stable}
There is an important relationship between moduli space of a polygonal linkage and moduli space of stable point configurations on $S^1$. The relationship almost automatically extends to quasilinkages.
 We stress that the below is a combination of the classical construction borrowed from \citep{KM5}
 with the cell decomposition approach from \citep{P}.

 \bigskip

Assume that a quasilinkage $\FF$ is fixed.
 \begin{definition}A  configuration of $n$  (not necessarily distinct) marked  points
 $p_1,...,p_n $ on the unit circle $ \SS^1$ is called   $\FF${\it-stable}
 if the following holds:

  If  the points $\{p_i\}_{i \in I}$ coincide, then the set $I \subset [n]$  is $\FF$-short.
 \end{definition}

 We identify $\SS^1$ with the real projective line  $\mathbb{R}P^1$, which enables us to speak of diagonal action of the
 group $PSL(2,\mathbb{R})$  on the space of all stable configurations.
 We introduce  the quotient space

 $$M_{st}(\FF)= \{\hbox{space of }  \FF\hbox{-stable configurations}\}/PSL(2,\R).$$

 \begin{theorem}
 Given a quasilinkage $\FF$,
 \begin{enumerate}
              \item $M_{st}(\FF)$ is a $(n-3)$-dimensional manifold.
              \item $M_{st}(\FF)$ is homeomorphic to  $M(\FF)$.
              \item The stratification of the space $M_{st}(\FF)$  by combinatorial types is a regular cell complex dual to  the cell complex $M(\FF)$.
            \end{enumerate}

 \end{theorem}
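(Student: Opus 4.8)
The plan is to adapt the classical Kapovich--Millson construction (\citep{KM5}) for polygonal linkages to the purely combinatorial setting of a quasilinkage, exploiting the fact that $M(\FF)$ has already been shown to be a combinatorial manifold (Theorem \ref{thm:manifold}) whose cell structure depends only on the family of $\FF$-short sets. First I would fix a length vector; but since $\FF$ need not be real, one cannot expect a single global $L$. Instead the strategy is to build the homeomorphism cell-by-cell, matching the cell structure of $M(\FF)$ to a natural stratification of the space of $\FF$-stable configurations, and then invoke the local model: near each vertex $v$ the star of $v$ in $M(\FF)$ is combinatorially isomorphic to the star of a vertex of $CWM^*(L_v)$ for a genuine length vector $L_v$, where the classical result is known.

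The key steps, in order, would be: (i) Describe the stratification of $M_{st}(\FF)$ by \emph{combinatorial type}: a stable configuration $p_1,\dots,p_n$ determines a cyclically ordered partition of $[n]$ (the cyclic order of the distinct point-locations on $\SS^1$, together with the partition into coincidence classes), and $\FF$-stability says precisely that every block is $\FF$-short, i.e. the combinatorial type is an $\FF$-admissible cyclically ordered partition. (ii) Check that the $PSL(2,\R)$-action is proper and free on $\FF$-stable configurations — freeness holds because $\FF$ contains all singletons but $[n]\setminus\{i\}$ is then long, so a stable configuration has at least three distinct points (three for $n$ small enough is exactly the genericity that kills stabilizers), and properness is standard for configurations of $\geq 3$ distinct points on $\mathbb{R}P^1$; this gives (1), that $M_{st}(\FF)$ is a manifold, once we know each stratum has the expected dimension. (iii) Show the stratum indexed by a partition into $n-k$ blocks is an open cell of dimension $k$: fixing the cyclic order, the positions of the $n-k$ distinct points on $\SS^1=\mathbb{R}P^1$ modulo $PSL(2,\R)$ give $(n-k)-3$ moduli, and allowing the blocks to be refined by moving coincident points apart accounts for the remaining parameters — more cleanly, one observes that the closure relations among strata are exactly ``$\lambda'$ finer than $\lambda$'', which is the incidence relation defining $M(\FF)$. (iv) Conclude that the stratification is a regular CW complex combinatorially dual to $M(\FF)$, which is statement (3), and that the two underlying spaces are homeomorphic, which is statement (2).

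For step (iii)–(iv) the cleanest route is to avoid reconstructing the cell structure from scratch and instead argue locally: around a maximal stratum (a configuration of $n$ distinct points in convex position in cyclic order $\pi$), a neighbourhood in $M_{st}(\FF)$ is obtained by letting consecutive points merge when the merging block is $\FF$-short, and this neighbourhood is combinatorially the boundary complex of a polytope — in fact, by the Gale-diagram Proposition proved above, the link of the central stratum is dual to the polytope $K(\FF,v)$, which is exactly the star of the vertex $v$ in $M(\FF)$. Since both $M_{st}(\FF)$ and $M(\FF)$ are covered by such neighbourhoods that are pairwise compatibly identified (the identifications on overlaps being forced by the combinatorial type), one patches the local homeomorphisms together. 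The main obstacle I anticipate is step (iii): showing that each stratum is genuinely a cell of the correct dimension and, more delicately, that the \emph{global} gluing of the local homeomorphisms is well-defined, since unlike the real-linkage case there is no ambient algebraic variety $M_{st}(\FF)$ sitting inside — one must verify directly that the point-configuration space, with its natural topology, is locally modelled on $CWM^*(L_v)$ and that these models agree on overlaps. This is where the work of \citep{KM5} combined with \citep{P} is invoked, and where one must be careful that the combinatorial data $\FF$ (rather than a length vector) suffices to pin down both the local models and their gluing.
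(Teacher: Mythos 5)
Your overall route is the same as the paper's: the paper's (very terse) proof consists exactly of your steps (i), (ii) and (iv) --- label each configuration by its combinatorial type, note that the label is $PSL(2,\R)$-invariant, observe that the equivalence classes are open cells, and read off the cell structure, which is dual to $M(\FF)$. The extra verifications you propose (at least three distinct points, hence a free and proper $PSL(2,\R)$-action; local models via $L_v$ and the polytope $K(\FF,v)$) are sound and in the spirit of the constructions already in the paper, though note that the two-block coincidence pattern is excluded by strong complementarity applied to an arbitrary splitting, not just to complements of singletons.

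There is, however, one concrete confusion in your step (iii) that needs fixing. The stratum of $M_{st}(\FF)$ labeled by a partition into $n-k$ blocks is an open cell of dimension $(n-k)-3$ --- exactly the number of moduli you compute --- and \emph{not} of dimension $k$; there are no ``remaining parameters'': moving coincident points apart refines the label and therefore leaves the stratum, landing in a different, higher-dimensional one. Correspondingly, the closure relation runs opposite to the incidence relation in $M(\FF)$: merging points coarsens the label, so the closure of the stratum labeled $\lambda$ contains the strata whose labels are coarser than $\lambda$. Thus the face poset of the stratification is the \emph{opposite} of the face poset of $M(\FF)$, with complementary dimensions $k \leftrightarrow n-3-k$; this reversal is precisely the duality asserted in part (3), whereas parts of your step (iii) implicitly treat the correspondence as a dimension-preserving isomorphism, which is inconsistent with the duality you correctly state in (iv). Once the dimensions and the direction of the closure relation are corrected, the rest of your plan --- regularity of the closed strata via the polytopes $K(\FF,v)$ (each closed top-dimensional stratum is combinatorially a polytope whose boundary complex is anti-isomorphic to the vertex star in $M(\FF)$), and the homeomorphism of (2) obtained by patching the local models --- goes through and is, if anything, more detailed than the paper's own argument.
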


\begin{proof}
We label each point configuration  by its \textit{combinatorial type} -- the cyclically ordered partition of the set $[n]$. The labels do not change under the action of the group $PSL(2,\R)$.
 Equivalence classes are open balls of different dimensions, and  can be considered as open cells of some cell decomposition.

We arrive at the cell complex  on $M_{st}(\FF)$
defined as follows:
 \begin{enumerate}
   \item  The $k$-cell of the complex are labeled by (all possible) admissible cyclically ordered partition of $[n]$ into $k+3$ non-empty subsets. Given a cell $C$, its label is denoted by $\lambda (C)$.
   \item A closed cell $C$  belongs to the boundary of another closed cell $C'$ whenever the label $\lambda (C')$ is finer than $\lambda (C')$.
 \end{enumerate}
 This  cell decomposition
 is obviously combinatorially dual to the cell complex $M(\FF)$.
\end{proof}

\section{A family of quasilinkages generated by an oriented matroid}
\label{sect:matroids}
\subsection*{Oriented matroids: a short reminder}

Let us start with some definitions.
\begin{definition}
 A  $(n-1)$-pseudosphere is a tame embedding of the oriented $(n-1)$-dimensional sphere $S^{n-1}$ into the $n$-dimensional sphere $ S^n$.
 "Tame" here means just "not wild", so it is sufficient to consider just piecewise linear embeddings.

  Each $(n-1)$-pseudosphere $E$ divides $ S^n$ into two parts, $E^+$ and $E^-$. We call them hemispheres related to $E$. Here "plus" and "minus" are  assigned consistent with the orientation of $E$.

  A pseudosphere arrangement on $S^n$ is a finite collection of $(n-1)$-pseudospheres that intersect along pseudospheres. That is,\begin{enumerate}
       \item Any number of pseudospheres from the arrangement intersect by some other pseudosphere.
       \item Any number of  (closed)
       hemispheres $E_i^+$ and $E_i^-$, where $E \in \mathcal{A}$, intersect by a topological ball.
     \end{enumerate}

\end{definition}

An oriented matroid is a concept which abstracts
 combinatorial properties of directed graphs,
point configurations,  vector configurations,   sphere arrangements, etc.
It is defined axiomatically, but we prefer not to present here the complete definition, referring the reader to \citep{AB61}.
The reason is that all what we need in the framework of the paper, is the following crucial
feature  of matroids,  the Folkman-Lawrence topological representation theorem:

\textit{Oriented matroids of rank $n$
are in a one-to-one correspondence with arrangements of $(n-1)$-pseudospheres.}

\bigskip

Here  are some further facts  about matroids:
\begin{enumerate}
    \item  Any configuration of spheres is automatically a pseudosphere arrangement, and therefore, represents
some oriented matroid.

    \item Some of pseudosphere arrangements can be \textit{straightened}, that is,
there exists a combinatorially equivalent arrangement of spheres.
Such arrangements represent the \textit{realizable matroids}.
    \item  However, there exist many non-realizable matroids. In other words, the class of pseudosphere arrangements is significantly
 wider than the class of sphere arrangements.

\end{enumerate}

\subsection*{ An oriented matroid with some extra properties generates
a collection of quasilinkages}
The below construction generalizes the walls-and-chambers stratification of
 the parameter space of polygonal linkages (see Section \ref{SectionIntroduction}).

For the classical setting, there exists just one parameter space $\mathbb{R}P^n_{>0}$
with a (unique) subdivision into chambers. However, for quasilinkages we have many different stratifications:
as explained below,  any matroid (with some extra properties) provides  an analogue of "parameter space + chambers".
The idea is  to replace the walls  $\sum_Ix_j=\sum_{\overline{I}}x_j$  by appropriate pseudospheres.
Besides, to single out the parameter space, we also need to replace coordinate
hyperplanes by some pseudospheres.

Assume we have an arrangement  $\mathcal{A}$ of
$(n+2^{n}-2)$  pseudospheres  on the sphere $S^{n-1}$.
Assume that $\mathcal{A}$ contains \begin{itemize}
                                     \item $n$ pseudospheres   $e_i$ labeled by the elements of $[n]$, and
                                     \item $(2^{n}-2)$ pseudospheres $E_I$ labeled by all proper non-empty subsets of the set $[n]$.
                                   \end{itemize}

Denote by  $\Delta$ the intersection of  the   hemispheres   associated to all the $e_i$
and to all  pseudospheres labeled by one-element sets : $$\Delta=\bigcap_{i=1}^n e_i^+ \cap\bigcap_{i=1}^n E_{\{i\}}^+.$$

\begin{definition} In this notation,
$\mathcal{A}$ is called a {\it Q-arrangement} if
the following holds:
\begin{enumerate}
\item
 Each subset $I$ and its complement
                     $\overline{I}=[n]\setminus I$  label one and the same pseudosphere, but with different orientations.
                     That is,
                     $$ E_{\overline{I}}^+=S^{n}\setminus E_I^+.$$

 \item  All the pseudospheres are different:
 for each $I\neq J\neq \overline{I}$, $E_I^{\pm}\neq E_J^{\pm}$
  \item For any  sets  $J \subseteq I \subset [n]$, we  always have

  $$E_I^+\cap \Delta \subseteq E_J^+.$$
\end{enumerate}
\end{definition}

Assume that a Q-arrangement $\mathcal{A}$ is fixed.
The pseudospheres from $\mathcal{A}$ tile the domain $\Delta$ into a number of (open) \textit{chambers}  separated
by
the intersections $E_I\cap \Delta$ that are called\textit{ walls}.
We say that two chambers $C,C'$ are \textit{adjacent} if there is exactly one wall separating them.

\begin{definition} Given a $Q$-arrangement, we associate with
each chamber $C$  a collection of short subsets   $L(C)$ of  the set $[n]$  by the following rule:

A subset $I\subset [n]$  is short whenever $C \subset E_I^+$.
\end{definition}

The following theorem follows straightforwadly from the above constructions.

\begin{theorem}Given a Q-arrangement,
\begin{enumerate}
                 \item By the above rule, each chamber $C$ yields a quasilinkage $L(C)$,
                 and, consequently, the PL manifold $M(L(C))$.
                \item The quasilinkages for two adjacent cameras differ by a flip.
                 \item The manifolds $M(L(C))$ and $M(L(C'))$  for two adjacent cameras
                   differ on a Morse surgery which is compatible to the
                 cell structure.
               \end{enumerate}
               \qed
\end{theorem}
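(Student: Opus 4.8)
The plan is to unwind the definitions and check that all three assertions are immediate consequences of constructions already in hand. For part (1), fix a chamber $C$ and set $\FF := L(C)$; I need to verify that $\FF$ is a quasilinkage in the sense of the earlier definition, i.e. that it contains all singletons, is monotone, and satisfies strong complementarity. Singletons: since $C \subset \Delta \subset E_{\{i\}}^+$ for every $i$ (by the very definition of $\Delta$ as the intersection of the $E_{\{i\}}^+$), every singleton is short. Monotonicity: if $J \subseteq I$ and $I$ is short, then $C \subset E_I^+$; but $C \subset \Delta$ as well, so $C \subset E_I^+ \cap \Delta \subseteq E_J^+$ by axiom (3) of a $Q$-arrangement, hence $J$ is short. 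Strong complementarity: by axiom (1), $E_{\overline I}^+ = S^n \setminus E_I^+$, so $C$ lies in exactly one of $E_I^+$, $E_{\overline I}^+$ (it lies in neither only if it meets the pseudosphere $E_I$, which cannot happen for an open chamber, and in both is impossible since they are disjoint). Thus exactly one of $I$, $\overline I$ is short, which is precisely strong complementarity. Once $\FF$ is known to be a quasilinkage, Theorem~\ref{thm:manifold} together with the PL-structure theorem yields the PL manifold $M(L(C))$, so (1) is done.

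For part (2), let $C, C'$ be adjacent chambers, separated by a single wall, which by construction lies on some pseudosphere $E_T$. The only subsets $I$ whose ``short/long'' status can change when crossing this wall are $I = T$ and $I = \overline T$ (all other pseudospheres $E_I$ do not separate $C$ from $C'$, and the coordinate pseudospheres $e_i$ and the singleton ones do not separate chambers inside $\Delta$). So $L(C)$ and $L(C')$ agree except that one of them has $T$ short and $\overline T$ long while the other has the reverse. It remains to observe that the set which switches from short to long must be maximal among short sets: if $T \subset T'$ with $T'$ still short on the side where $T$ is short, then by axiom (3) crossing $E_T$ keeps $T'$ short (its pseudosphere is not crossed), so $T'$ must have been long on the other side already — but monotonicity would then force $T$ long there too, consistent with $T$ switching. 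The point is that $T$ is a maximal short set on whichever side it is short, so passing from $C$ to $C'$ is exactly the flip operation $F_T$ of Subsection~\ref{subsect:flips}. Hence the quasilinkages of adjacent chambers differ by a flip.

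Part (3) is then a direct application of Proposition~\ref{prop:flipsurgery}: since $M(L(C))$ and $M(L(C'))$ are the moduli spaces of quasilinkages differing by the flip $F_T$, they differ by a Morse surgery of index $n - |T| - 1$, and that proposition already records that the surgery is compatible with the cell structure (it is performed by excising $(\partial\Pi_{n-|T|})\times\Pi_{|T|}$ and gluing in $\Pi_{n-|T|}\times(\partial\Pi_{|T|})$ as subcomplexes). So all three claims reduce to bookkeeping on top of earlier results, which is why the statement is flagged as following ``straightforwardly.''

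The one place requiring genuine care — and the step I would write out most carefully — is the maximality claim in part (2): that the set $T$ defining the separating wall is a maximal $\FF$-short set on the side where it is short, which is what makes ``crossing a wall'' coincide with ``applying a flip'' rather than some more general move. This needs axioms (1)–(3) of a $Q$-arrangement used together: axiom (2) guarantees distinct subsets give distinct walls so the crossing changes the status of exactly the pair $\{T,\overline T\}$, and axiom (3) guarantees that any $T' \supsetneq T$ has its status unaffected by the crossing, forcing $T'$ to be long on both sides and hence $T$ to be maximal short on the relevant side. Everything else is a routine unpacking of definitions.
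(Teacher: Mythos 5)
Your proposal is correct and follows exactly the route the paper intends (the paper offers no written proof, merely asserting the theorem ``follows straightforwardly''): verify the three quasilinkage axioms from the three $Q$-arrangement axioms, identify crossing the unique separating wall $E_T\cap\Delta$ with the flip $F_T$ via the maximality of $T$, and invoke Proposition~\ref{prop:flipsurgery} for the surgery statement. The only blemish is the self-contradictory wording of the maximality step in part (2) (``crossing $E_T$ keeps $T'$ short \dots so $T'$ must have been long on the other side already''); the correct argument, which your final paragraph does state, is that $T'\supsetneq T$ keeps its status across the wall, and monotonicity of $L(C')$ then forces $T'$ to be long on both sides, so $T$ is maximal short in $L(C)$.
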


\begin{figure}
\centering
\includegraphics[width=10 cm]{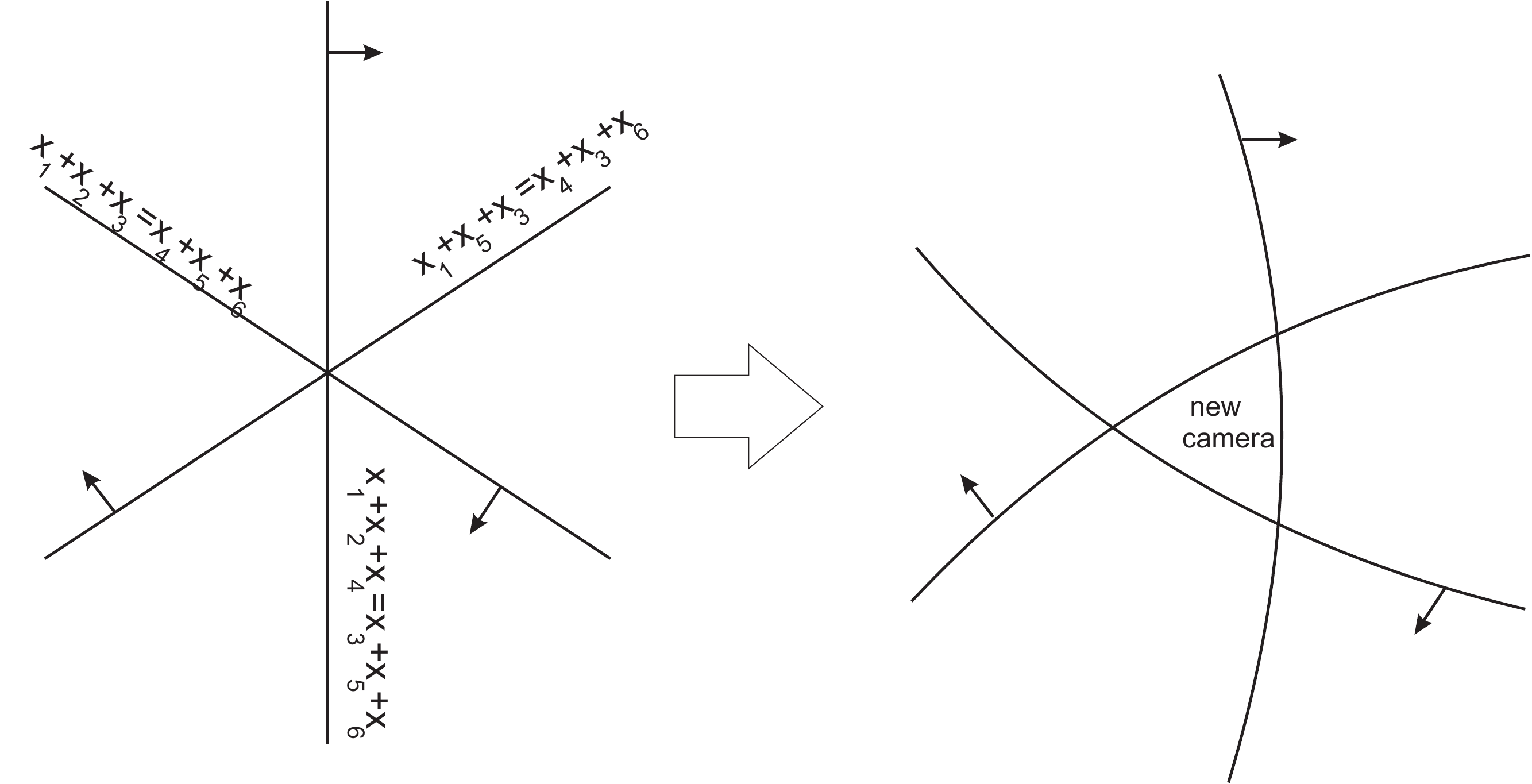}
\caption{The arrangement of pseudospheres generates the symmetric
quasilinkage from Example \ref{ex6}
\label{FigMatroidSmall}}
\end{figure}

\begin{example}
Consider the  collection walls and cameras  with $n=6$  for the
classical setting. Take the 10 walls  of type
$x_{i_1}+x_{i_2}+x_{i_3}=x_{i_4}+x_{i_5}+x_{i_6}$. They  intersect
at a single point $X=(1/2,1/2,...,1/2)$, and no other wall contains
the point $X$. We turn the walls to pseudospheres by a local
perturbation in a neighborhood of  $X$  in such a way that there
arises a new camera corresponding to the symmetric quasilinkage from
Example \ref{ex6}. Figure \ref{FigMatroidSmall}  gives an
illustration of the idea (however, in the figure we present a
smaller number of walls in the smaller dimension).
\end{example}

\section{Acknowledgements}
This research is supported  by JSC ``Gazprom Neft'' and by the Chebyshev Laboratory  (Department of Mathematics and Mechanics, St. Petersburg State University)  under RF Government grant 11.G34.31.0026



\end{document}